\documentclass[a4paper,11pt]{article}
\usepackage{subcaption}
\usepackage{calc}
\usepackage{microtype}
\usepackage{amsmath}
\usepackage{amssymb}
\usepackage{amsthm}
\usepackage{color}
\usepackage{graphicx}
\usepackage{caption}
\usepackage{subcaption}
\usepackage{pst-pdf,pstricks-add}
\usepackage[hidelinks]{hyperref}
\usepackage{tikz}
\usetikzlibrary{plotmarks}
\usepackage{pgfplots}
\usepackage{dsfont}
\usepackage{listings}
\usepackage{comment}
\usepackage{cite}
\usepackage{mathrsfs,graphicx,color,float, indentfirst,textcomp}
\usepackage{setspace}
\usepackage{latexsym,amssymb,lscape,amsmath,amsthm,amsfonts,amssymb,cite,enumerate}
\usepackage{lscape}
\usepackage{multirow} 
\usepackage{xcolor}
\usepackage{geometry}
\usepackage[normalem]{ulem}
\geometry{
 a4paper,
 total={170mm,257mm},
 left=20mm,
 right = 20mm,
 bottom = 30mm,
 top=20mm,
 }
\usepackage{algorithm2e}
\SetKwComment{Comment}{/* }{ */}
\numberwithin{equation}{section}
\newtheorem{remark}{Remark}[section]

\newtheorem{lemma}{Lemma}[section]
\newtheorem{theorem}{Theorem}[section]

\newtheorem{proposition}{Proposition}[section]
\newtheorem{assumption}{Assumption}[section]

\newcommand{\Rd}{ {\mathbb{R}^\textup{d}}}
\newcommand{\R}{ \mathbb{R}}
\newcommand{\RR}{ \mathbb{R}}
\newcommand{\E}{\mathcal{E}}
\newcommand{\Par}{\mathscr{P}}

\newcommand{\eps}{\epsilon}
\newcommand{\ve}{\varepsilon}
\renewcommand{\d}{\textup{d}}
\newcommand{\supp}{\textup{supp}}
\newcommand{\law}{\textup{Law}}

\newcommand{\OX}{\overline{X}}
\def\argmin{\textup{argmin}}

\def\be{\begin{equation}}
\def\ee{\end{equation}}
\def\bea{\begin{eqnarray}}
\def\eea{\end{eqnarray}}

\title{Kinetic description and convergence analysis of genetic algorithms\\ for global optimization}
\date{\today}
	\author{Giacomo Borghi\thanks{RWTH Aachen University, Institut für Geometrie und Praktische Mathematik, Templergraben 55, 52062, Aachen, Germany ({\tt borghi@eddy.rwth-aachen.de}, corresponding author)}
	\and 
	Lorenzo Pareschi\thanks{Maxwell Institute for Mathematical Sciences and
Department of Mathematics, School of Mathematical and Computer Sciences (MACS), HWU Edinburgh, UK.} $^{,}$\thanks{Department of Mathematics and Computer Science, University of Ferrara, Italy.}}

\begin{document}
	\maketitle
\abstract{
Genetic Algorithms (GA) are a class of metaheuristic global optimization methods inspired by
the process of natural selection among individuals in a population. Despite their widespread use, a
comprehensive theoretical analysis of these methods remains challenging due to the complexity of the
heuristic mechanisms involved. In this work, relying on the tools of statistical physics, we take a first
step towards a mathematical understanding of GA by showing how their behavior for a large number
of individuals can be approximated through a time-discrete kinetic model. This allows us to prove
the convergence of the algorithm towards a global minimum under mild assumptions on the objective
function for a popular choice of selection mechanism. Furthermore, we derive a time-continuous
model of GA, represented by a Boltzmann-like partial differential equation, and establish relations
with other kinetic and mean-field dynamics in optimization. Numerical experiments support the
validity of the proposed kinetic approximation and investigate the asymptotic configurations of the
GA particle system for different selection mechanisms and benchmark problems.}

\bigskip
\bigskip

\textbf{Keywords:} Genetic algorithms, global optimization, stochastic particle systems, Boltzmann equation, mean field equations

\bigskip
\bigskip

\textbf{MSC codes:} 65C35, 90C26, 90C59, 35Q90, 35Q20



	\tableofcontents

\section{Introduction}
Many real-world problems are too complex to be solved using traditional optimization methods. Metaheuristic optimization provides a powerful set of tools for tackling these challenges, enabling researchers and practitioners to address a wide range of issues in areas such as engineering, finance, healthcare, and more. Unlike deterministic optimization methods, meta-heuristics do not guarantee the search for the global optimum, but provide a suitable solution to the application under consideration by exploring the search space through a random process guided by some heuristics. Among the most famous metaheuristic optimization algorithms for global optimization let us mention simulated annealing \cite{kirkpatrick1983annealing}, particle swarm optimization \cite{kennedy1995particle}, genetic algorithms \cite{Holland:1992:ANA:531075}, ant colony optimization \cite{dorigo96} and consensus-based optimization \cite{pinnau2017consensus}. We refer to \cite{talbi2009meta, glover2003} for a general introduction to the topic.  

From a statistical physics perspective, metaheuristic algorithms can be viewed as a form of stochastic optimization that samples the solution space and explores its properties using principles of statistical mechanics. The population of solutions can be thought of as a thermodynamic ensemble, where each individual solution corresponds to a microstate and the entire population represents a macrostate. The fitness function acts as an energy function that drives the evolution of the population towards higher fitness states, similar to how a physical system evolves towards a lower energy state.

Metaheuristic optimization and statistical physics share many similarities in their approaches to exploring large search spaces. Both fields deal with systems that involve a large number of components, such as solutions to a problem in optimization or particles in a physical system. In these systems, the search space is vast, and exploring it exhaustively is often infeasible. To navigate these large search spaces, both metaheuristic optimization and statistical physics use probabilistic methods that guide their search in a principled way. The use of probability allows both fields to handle complex interactions and dependencies between different components of the system.

Although metaheuristics have proven to be effective in solving complex optimization problems and have been successfully applied in many real-world applications, proving rigorous convergence to the global minimum has revealed to be challenging due to the stochastic nature of the algorithms, the lack of a rigorous mathematical foundation, and the large solution spaces. Drawing on the principles of statistical physics has led to significant advances in the development of more efficient and effective optimization algorithms. In particular the use of mathematical tools designed for mean-field and kinetic equations to describe the behavior of large systems of interacting particles has been adapted to these new fields, demonstrating the ability to prove convergence to the global minimum under fairly mild assumption on the fitness function. Recently, these ideas have led to a new view of metaheuristic optimization by considering the corresponding continuous dynamics described by appropriate kinetic equations of Boltzmann type \cite{benfenati2021binary,albi2023kinetic} and mean-field type \cite{pinnau2017consensus,Grassi21,carrillo2019consensus,carrillo2018analytical,grassi23,fornasier2021convergence,Bolte2023}, even in constrained and multi-objective contexts  \cite{borghi2023multi,borghi2023constrained,carrillo2021constrained,fhps20-1}.

As a result of the great success of meta-heuristic methods for global optimization problems, many programming languages have developed specific libraries that implement such algorithms, for example, MATLAB since the 1990s has built in three derivative-free optimization heuristic algorithms: simulated annealing, particle swarm optimization and genetic algorithm (see Table \ref{tab1}). While the first two approaches have recently been analyzed mathematically through appropriate mean-field reformulations \cite{Pavlio23, Mon18,Grassi21, Hui22, pareschi23}, to the authors' knowledge a similar mathematical theory based on genetic algorithms is still not available.  Preliminary has been done in \cite{albi2023kinetic} where authors proposed and analyzed a metaheuristic including typical mechanisms of genetic algorithms like survival-of-the-fittest and mutation strategies.


In this work, we propose a kinetic description of a paradigmatic genetic algorithm and theoretically
analyze its convergence properties and relations to other metaheuristics.  We do this by interpreting the algorithm run as a single realization of a Markov chain process describing the evolution of a system of $N$ particles. Then, we perform a kinetic approximation by assuming propagation of chaos for $N \gg 1$, allowing to describe the dynamics as a mono-particle system. We show that the modeling procedure is flexible enough to describe different variants of genetic algorithms: both where the parents selection is fitness-based and rank-based. 

Thanks to the resulting kinetic approximation, we gain a deeper understanding of genetic
algorithms from two distinct perspectives. First, we are able to establish convergence guarantees
when employing Boltzmann selection of parents. In particular, we demonstrate that the mean of
the particle system converges to an arbitrarily small neighborhood of a global minimizer, provided
that the algorithm parameters satisfy specific criteria and under mild assumptions on the objective
function. Secondly, the kinetic modeling enables us to establish novel relationships between genetic
algorithms and other metaheuristics. We investigate this aspect by formally deriving a time-continuous version of the kinetic model, which takes the form of a partial differential equation
of Boltzmann type. 
After discussing differences and similarities with other kinetic models in
optimization, notably the Kinetic Binary Optimization (KBO) method \cite{benfenati2021binary}, we show how to recover
the Consensus-Based Optimization (CBO) method \cite{pinnau2017consensus} by taking a suitable parameter scaling.

The formal derivation of the kinetic model is supported by numerical simulations of the particle systems with increasing population size. We note in particular that, for large $N$, the empirical probability measure associated with the particle system becomes deterministic which validates the propagation of chaos assumption.
Moreover, we numerically investigate the asymptotic steady states of the particle system for different genetic algorithms and benchmark problems. Finally, we test GA against benchmark problems in dimension $\d = 10$ with different scaling of the parameters and population sizes.

The rest of the paper is organized as follows. In Section \ref{sec:2} we present the fundamental mechanisms of genetic algorithm and derive the kinetic description. Section \ref{sec:3} is devoted to the derivation of the time-continuous kinetic model and its relations with other kinetic and mean-field models in optimization. The theoretical convergence analysis of genetic algorithm with Boltzmann selection is carried out in Section \ref{sec:analysis}, while the numerical experiments are illustrated in \ref{sec:num}. Final remarks and future research directions are discusses in Section \ref{sec:conclusions}.

\begin{table}[tb]
\caption{Main differences between some classical metaheuristic optimization algorithms}
\begin{center}
\begin{tabular}{l|l}
\hline\hline
{\bf Algorithm} & {\bf Feature}\\
\hline\hline
Simulated Annealing & Generates a single point at each iteration.\\
(SA)& The sequence of points approaches an optimal solution.\\
\hline
Genetic Algorithm & Generates a population of points at each iteration.\\
(GA)& The fittest evolve towards an optimal solution.\\
\hline
Particle Swarm & Generates a population of points at each iteration. \\
Optimization (PSO)& The population reach consensus towards an optimal solution.\\
\hline
\end{tabular}
\end{center}
\label{tab1}
\end{table}%

\section{A probabilistic description of genetic algorithms}
\label{sec:2}

In the sequel we consider the following optimization problem
\begin{equation}\label{typrob}
x^\star \in \underset{x\in \Rd}{\argmin}\, \E(x)\,,
\end{equation}
where $\E:\Rd \to \RR$ is a given continuous objective function, which we wish to minimize over the search space $\Rd$, $\d \in \mathbb{N}$. 

\subsection {Genetic algorithms and binary interactions}

Genetic algorithms are a type of optimization algorithm inspired by the process of natural selection.
They use a population of potential solutions to a problem and iteratively evolve such population to find better solutions.
The algorithm works by selecting the fittest individuals in the population and using them to create new individuals through recombination and mutation. Over time, the population evolves towards better solutions through a process of selection \cite{Holland:1992:ANA:531075}. As in most metaheuristics, the central theme in genetic algorithms is robustness, namely a balance between efficiency and efficacy necessary to provide a reasonable solution even in the most critical situations.

In their original setting, genetic algorithms are applied to vectors of integers or vectors of binary numbers, characterizing the genes of the individuals. Here we describe the method in the case of real valued vectors as discussed for example in \cite{WRIGHT1991205}.
 
The algorithms iteratively update a population of trial points $x_i \in\Rd$, $i=1,\ldots,N$ and can be summarized as follows. 
\begin{enumerate}
\item Select a group of individuals in the current population, called parents, who contribute with their genes—the entries of their vectors—to their children. The algorithm selects individuals that have better fitness values as parents. This procedure is called \emph{selection}.

\item Given a pair of parents, the genetic algorithm adopts two main evolutionary dynamics to generate children:
\begin{itemize}
\item 
\emph{Crossover}: by combining the vectors of a pair of parents in different ways.
\item
\emph{Mutation}: by introducing random changes, or mutations, to a single parent.
\end{itemize}
\item The new generation is them composed by the generated children and part of the old generation. The individuals which are directly passed to the next generation can be either chosen randomly or following a fitness-based mechanism. In this latter case they are called \emph{elite} group.

\end{enumerate}

Let us describe the algorithms more in details. Let $\odot$ denote the component-wise (Hadamard) product between vectors of same length.
Assuming a pair of parents $(x, x_*)$ have been selected from the population $\{x_i \}_{i=1}^N$ accordingly to the selection procedure, we can describe the evolution process leading to an offspring  $x'$ as 
\be \label{eq:coll}
\begin{split}
x'& = \underbrace{(1-\gamma)\odot x + \gamma \odot x_*}_{\hbox{\footnotesize \emph{crossover}}} + \underbrace{\sigma D \odot \xi }_{\hbox{\footnotesize \emph{mutation}}}\,\\
&=: \mathcal{C}(x,x_*, \xi) 
\end{split}
\ee 
where  $\gamma \in [0,1]^\d$ is a crossover vector and $\xi$ a random vector with zero mean and unit variance that defines the random mutation with coefficient $\sigma>0$. 
For example, if all elements of $\gamma$ are either $0$ or $1$ we have a switch of components between parents, whereas if all elements are equal we have a uniform blending among parents. Other choices are also possible, even involving random choices of the crossover elements \cite{goldberg1989}. 
The mutation vector $D\in \Rd$ is assumed time dependent, so that as time increases (proportional to the generation number), the mutated solutions are generated closer to original solutions. When all components are equal we have isotropic scattering, otherwise we will have anisotropic noise. The fact that diffusion decreases over time introduces an effect similar to that of simulated annealing and is essential to avoid introducing mutations near the global minimum.

By assuming the number $N$ of individuals is the same for all generations, the generating procedure described above is then repeated $M\leq N$ times to generate the new individuals $x_i'$ with $i = 1, \dots, M$. The remaining $M-N$ individuals are directly taken from the previous generation $\{x_i \}_{i=1}^N$, eventually according to the elitist strategy. We note that $x_i$ is not necessary a parent of $x_i'$, and that, moreover, there is no particular relation between the two individuals. We point out that these algorithmic procedures are also known in the literature as \textit{evolution strategies} \cite{Diouane2015}.


The process just described presents several analogies with opinion and wealth dynamics described by microscopic interactions, where crossover correspond to binary wealth and opinion exchanges between agents and mutations to the random behavior of agents \cite{pareschi13}. In this respect, genetic algorithms have much in common with what can be called a binary genetic exchange process. The main difference is that the overall population is not conserved since the binary reproduction process is incremental (children do not replace parents as in classical kinetic theory) and it is only by the selective removal of individuals that the population can be contained.

\subsection{Selection as a sampling procedure}

The selection of individuals for the creation of new ones is a central step in genetic algorithms. The most common techniques are given by Roulette Wheel, Rank, Tournament, and Boltzmann Selection \cite{katoch2021,khalid2013selection}. In this section, we show how selection strategies can be interpreted as a sampling procedure of probability measures. This will be an essential ingredient to model the individuals' evolution in genetic algorithms as a mono-particle process. 

In the following, we consider a generic generation $\{ X_i\}_{i=1}^N$, representing positions in the search
space, and study how a couple of parents $(X, X_*)$ is selected. We denote with $f^N$ the empirical probability measure associated to the individuals, that is, 
\begin{equation}
f^N = \frac1N\sum_{i=1}^N \delta_{X_i}\,,
\end{equation}
where $\delta_{X_i}$ is the Dirac delta function.  For simplicity, we assume that each couple of parents $(X, X_*)$ is sampled independently from one another. 
 
The main distinction among selection techniques is whether they are fitness-based or rank-based. In a fitness-based mechanism, the probability of two parents to be picked directly depends on their objective values. In a rank-based procedure, instead, the individuals are first ranked according to their objective values and then the parents' probability depends on the rank only. 

\medskip 

\textbf{Fitness-based selection.}
Random Wheel Selection (RWS) is popular fitness-based selection strategy \cite{katoch2021}.  Let $g: \R \to (0,\infty)$ be a non-increasing continuous function which associates to any objective value $\E(x)$ its corresponding fitness $g(\E(x))>0$. The fitness function should be non-increasing as we aim to minimize $\E$, and so the fittest individuals are those with low-objective value. In RWS, the probability of individual $X_i$ to be picked as a parent $X$ is directly proportional to its fitness  $g(\E(X_i))$ 
\be
\label{eq:PRWS}
\mathbb{P}(X^{RWS} = X_i)  = \frac{g(\E(X_i))}{ \sum_{j =1}^N g(\E(X_j))}\, ,
\ee
and the two parents $X^{RWS}, X^{RWS}_*$ selected with RWS are chosen independently from one another. 

A common fitness choice is given by the exponential $g(\cdot) = \exp(-\alpha \E(\cdot))$ for some $\alpha>0$ leading to the so-called Boltzmann Selection (BS) strategy. The name comes from the fact that, with this choice of fitness, the parents turn out to be chosen according to the Boltzmann-Gibbs distribution associated with the objective $\E$ and inverse temperature $\alpha>0$:
\be
\label{eq:PBS}
\mathbb{P}(X^{BS} = X_i)  = \frac{e^{-\alpha \E(X_i)}}{ \sum_{j=1}^N e^{-\alpha \E(X_j)}} \, ,
\ee
for all $i = 1, \dots, N$ and the same holds for the second parent $X_*^{BS}$.

Fitness-based selection strategies can be generalized to arbitrary probability measures in a straightforward manner. Let $f \in\mathcal{P}(\Rd)$ be an arbitrary population distribution. We define the parents' probability measure $\Par_{RWS}[f] \in\mathcal{P}(\Rd \times \Rd)$ for RWS  as
\be
d\Par_{RWS}[f](x, x_*) :=  \frac{g(\E(x)) g(\E(x_*))}{\int g(\E(x))df(x) \int g(\E(x_*))df(x_*)} df(x) df(x_*)\,,
\ee
which, in case of BS, takes the particular form of
\be
d\Par_{BS}[f](x, x_*) :=  \frac{ e^{-\alpha (\E(x) +  \E(x_*))}}{\int e^{-\alpha\E(x)}df(x) \int e^{-\alpha\E(x_*)}df(x_*)} df(x) df(x_*)\,.
\label{eq:BS}
\ee
The above definitions generalize \eqref{eq:PRWS} and \eqref{eq:PBS} as direct computations lead to 
\[\law(X^{RWS}, X^{RWS}_*) = \Par_{RWS}[f^N] 
\quad \textup{and} \quad 
\law(X^{BS}, X^{BS}_*) = \Par_{BS}[f^N] 
\] 
whenever we consider empirical distributions.

\medskip

\textbf{Rank-based selection.}
We can perform the same generalization procedure for rank-based mechanism such as Rank Selection (RS). In RS, the probability of $X_i$ to be picked as a parent is dictated by its rank. Let $\textup{rank}(i) = j$ if $X_i$ is the $j$-th worst individual among the ensemble (assuming there are no individuals with same objective value). The RS parents' $X^{RS},X_*^{RS}$ probabilities are independently distributed and both given by 
\be
\label{eq:PRS}
\mathbb{P}(X^{RS} = X_i)  = 2\frac{\textup{rank}(i)}{N(N+1)} .
\ee
We note that the function $\textup{rank}$ can be equivalently defined as
\[ \textup{rank}(i) = \# \left \{x\;:\: \E(X_i) \leq \E(x)\right \} = N f^N \left(\{y\,:\, \E(X_i) \leq \E(y) \}\right)   \,.     \] 
According to the above definition, two or more different particles may also have the same rank if their fitness value coincide.  This leads to a reformulation of \eqref{eq:PRS} as
\be \notag
\mathbb{P}(X^{RS} = X_i)  = \frac{f^N \left(\{y\,:\, \E(X_i) \leq \E(y) \}\right) }{\sum_{j=1}^N f^N \left(\{y\,:\, \E(X_j) \leq \E(y) \}\right) } \, .
\ee
With such rewriting, it becomes clearer how to generalize rank-based selection to arbitrary probability measures $f\in \mathcal{P}(\Rd)$ of individuals. For RS, indeed, the parents' distribution is given by
\be\label{eq:RSpar}
d\Par_{RS}[f](x, x_*) :=  \frac{f\left(\{y\,:\, \E(x) \leq \E(y) \}\right) f\left(\{y\,:\, \E(x_*) \leq \E(y) \}\right)}{\left (\int f\left(\{y\,:\, \E(x) \leq \E(y) \}\right) df(x)\right)^2} df(x) df(x_*)\,.
\ee
Particle system with rank-based interactions and their mean-field limit have been investigated in \cite{banner2005atlas,jourdain2008propagation,jourdain2013chaos,kolli2018spde,bencheikh2022weak}. In the financial literature these systems are called rank-based models \cite[Chapter IV]{ben2009portfolio}, and the drift of the dynamics typically depends on the cumulative distribution function (cdf) of the stochastic process. Differently form rank-based models in the literature, in \eqref{eq:RSpar} the cdf is taken with respect to the objective function, and it is further normalized to obtain a probability measure.

We not that in all above cases, the parents' probability distribution $\Par[f]$ is absolutely continuous with respect to $f \otimes f$. In the following, we denote with $R[f]$ its density:
\begin{equation}
\label{eq:density}
d\Par[f](x,x_*) = R[f](x,x_*)\, df(x) df(x_*)\,,
\end{equation}
which can be seen as a reproduction kernel.

\subsection{Probabilistic description and kinetic approximation}

In this section, we show that genetic algorithms can be interpreted as single-realization of a system of $N$ Markov chains and derive a kinetic approximation of the system.
Let $\{X_i^{(k)}\}_{i=1}^N$ be $N$ random variables taking values in $\Rd$ and representing the population of individuals at the algorithmic step $k$. We denote the empirical probability measure of the generation as 
\begin{equation}
f^N_{(k)}=\frac1N \sum_{i=1}^N \delta_{X_i^{(k)}} \in \mathcal{P}(\mathcal{P}(\Rd))\,.
\end{equation}
We recall that in the genetic algorithm, part of the new generation is directly taken from the previous one (the \emph{elite} group), while the remaining part is given by newly born individuals. We indicate the fraction of new individuals with respect to the total population with $\tau \in (0,1]$. Assume we want to generate an individual $X^{(k+1)}_i$ of the generation $k+1$. To decide whether it belongs to the elite or not, we introduce a random variable $T_i^{(k)}$ with Bernoulli distribution $\textup{Bern}(\tau)$,  that is, $\mathbb{P}(T_i^{(k)} = 1) = \tau$ and $\mathbb{P}(T_i^{(k)} = 0) = 1 - \tau$. 
If $0$ is sampled, $X^{(k+1)}_i$ belongs to the elite group and coincide with a randomly sampled individual from the previous distribution, which we assume to be $X_{i}^{(k)}$ for simplicity. If $1$ is sampled, $X^{(k+1)}_i$ corresponds to a newly generated individual with parents sampled form $\Par[f^N_{(k)}]$.

Recall that the crossover and mutation procedure is determined by the map $\mathcal{C}$ defined in \eqref{eq:coll}. 
The individuals of the new generation are iteratively defined as  
\begin{equation}
 X^{(k+1)}_i = (1 - T^{(k)}_i)  X^{(k)}_i + T^{(k)}_i\, (X_i^{(k)})' \qquad \textup{for}\;\; i=1, \dots,N\,,
\label{eq:Nevolution}
\end{equation}
with $ (X_i^{(k)})' = \mathcal{C}(\tilde X_i^{(k)}, \tilde X_{i,*}^{(k)},\xi_i^{(k)})$ being the post-collisional position with parents $(\tilde X_{i}^{(k)}, \tilde X_{i,*}^{(k)})$ and mutation $\xi_i^{(k)}$ satisfying
\begin{equation} \notag
\law(\tilde X_i^{(k)}, \tilde X_{i,*}^{(k)}) = \Par[f^N_{(k)}], \qquad \textup{and} \qquad \law(\xi_i^{(k)}) =\mu
\end{equation}
with $\mu \in \mathcal{P}(\Rd)$ having zero mean and variance $I_\d$.

To study the large-time behavior of the individuals' interaction, we derive a kinetic approximation the system \eqref{eq:Nevolution}. A common strategy consists of making the so-called \textit{propagation of chaos} on the marginals \cite{sznitman1991chaos}. We assume that for large populations $N\gg1$, the individuals become independently identically distributed according to a distribution $f_{(k)}\in \mathcal{P}(\Rd)$ at all iterations $k\in \mathbb{Z}_+$. In particular, the population's probability measure as well as the parents' measure are then approximated as
\[	f^N_{(k)} \approx f_{(k)} \qquad \textup{and} \qquad \Par[f^N_{(k)}] \approx \Par [f_{(k)}]\,,
\]
respectively. 
Similarly as before, let $\overline T^{(k)} \sim \textup{Bern}(\tau)$ and $\overline{\xi}^{(k)} \sim \mu$.
Under the propagation of chaos assumption, update rule \eqref{eq:Nevolution} becomes independent of the index $i$ and can be re-written as the mono-particle process
\begin{equation}
\OX^{(k+1)} = (1 - \overline T^{(k)})  \OX^{(k)} + \overline T^{(k)}\, (\OX^{(k)})'
\label{eq:mono}
\end{equation}
with $\OX^{(k)}=\mathcal{C}(\tilde \OX^{(k)}, \tilde\OX_*^{(k)}, \overline \xi^{(k)})$ and
\begin{equation} \notag
\law(\tilde \OX^{(k)}, \tilde \OX_{*}^{(k)}) = \Par[f_{(k)}]\,. 
\end{equation}

Starting from an initial generation $f_{(0)}$, we have that for any measurable test function $\phi$ $f_{(k)}=\law(\OX^{(k)})$ satisfies
\begin{align} \label{eq:weak}
\int \phi (x)\,d f_{(k+1)}(x) &= (1 - \tau)\int \phi (x) \,df_{(k)}(x) + \tau \iint \phi(x')\,
d\Par[f_{(k)}](x,x_*)\,d\mu(\xi)
\end{align}
where we used the short notation $x'$ for $\mathcal{C}(x, x_*, \xi)$ introduced in \eqref{eq:coll}.

\begin{remark}
As we will see in the next section, equation \eqref{eq:weak} can be interpreted as an explicit discretization of a PDE of Boltzmann type. As a direct consequence, the Markov processes \eqref{eq:Nevolution} (and so the Genetic Algorithm itself) can be understood as a Monte Carlo simulation scheme of the kinetic model \cite{pareschi13}.
The approximation error introduced by the proposed kinetic approximation, therefore, is linked to the convergence properties of such stochastic particle-based numerical methods.

We refer to \cite{wagner2005stochastic} and the references therein for (non-quantitative) convergence results in the context of classical kinetic theory. We leave the convergence analysis of the particle scheme \eqref{eq:Nevolution} to the discrete Boltzmann equation \eqref{eq:weak} to future work.
\end{remark}

\section{A Boltzmann-type description}
\label{sec:3}

In this section, we show how to derive a continuous-in-time kinetic model which describes the individuals' evolution in genetic algorithms. We also show the relation between the GA model with other kinetic models in global optimization studied in the literature, namely those derived for Kinetic-Based Optimization (KBO) \cite{benfenati2021binary} and Consensus-Based Optimization (CBO) \cite{pinnau2017consensus} algorithms.

Recall $\tau$ quantifies the fraction of newborns per algorithmic step and therefore can be interpreted as the interaction frequency between the individuals. To derive a time-continuous description of the process we consider the limit $\tau\to 0$. 
Starting from \eqref{eq:weak}, we rearrange the terms to obtain that the sequence $(f_{(k)})_{k \in \mathbb{Z}_+}$ satisfies
\begin{align*}
\frac{\int \phi (x) \, df_{(k+1)}(x) - \int \phi (x)\,d f_{(k)}(x)}{\tau} &= \iint \phi(x')\,
d\Par[f_{(k)}](x,x_*)\, d\mu(\xi) -  \int \phi (x) \,d f_{(k)}(x) \\
& =  \iint \left(R[f_{(k)}](x,x_*) \phi(x') - \phi(x)   \right)\, df_{(k)}(x)\, df_{(k)}(x_*)\,. 
\end{align*}
By formally taking the limit $\tau \to 0$, the difference equation reduces to a Boltzmann-like equation of the form 
\be
\frac{d}{dt} \int \phi (x)\,df_{t}(x) =  \iiint  \left(R[f_t](x,x_*)\phi(x') - \phi(x)\right)\,df_t(x)\,df_t(x_*)\, d\mu(\xi)
\label{eq:boltzmann}
\ee
where $f_t, t>0,$ is a suitable limiting sequence for $f_{(k)}$. We refer the interested reader to \cite{cercignani1994mathematical,villani2002topicscollisional,pareschi13} for more details on kinetic theory and on the Boltzmann equation. 

In \eqref{eq:boltzmann}, $R[f](x,x_*)$ encodes the probability of $(x,x_*)$ to be picked as parents and $x' = \mathcal{C}(x,x_*, \xi)$ is the offspring generated from the crossover between $(x,x_*)$ and mutation $\xi$. With the test function $\phi(x) = 1$, we obtain that the right-hand-side of \eqref{eq:boltzmann} is zero, since $R[f_t]$ is a probability density with respect to $f_t \otimes f_t$. This means that, as expected, the total mass is preserved. This is the common approach in most implementations of genetic algorithms, since it avoids excessive population growth or oscillatory population behavior. We remark that kinetic modeling of non-conservative dynamics is also possible. In order to keep the mathematical analysis of the proposed model simpler, we leave this case as a future research direction.

\subsection{Relations to KBO}


When designing stochastic particle methods for optimization that are based on binary interactions, the objective function may enter the dynamics in two different ways: either in choosing which couple of particles interact, or in how they interact. GA employs the first strategy as the parents are chosen according to their fitness values. We recall in the following the Kinetic Binary Optimization (KBO) method \cite{benfenati2021binary}, which employs the latter strategy to underline differences and similarities between these two paradigms in metaheuristic optimization.


Similarly to GA, KBO can be derived as a Monte Carlo approximation of a kinetic equation of Boltzmann type.
Let $\lambda\in (0,1), \sigma>0$ be algorithm's parameters and $\xi,\xi_* \sim \mu= \mathcal{N}(0, I_\d)$. Given two particles $(x,x_*)$, the collisional dynamics defining the KBO system evolution is given by
\be
\begin{split}
x'&= \left(1 - \lambda \gamma^\E(x,x_*)\right)x + \lambda \gamma^\E(x,x_*) x_* + \sigma D(x,x_*) \odot \xi\\
x_*'&= \left(1 - \lambda \gamma^\E(x_*,x)\right)x_* + \lambda \gamma^\E(x_*,x) x + \sigma D(x_*,x)\odot \xi_* 
\end{split} \label{eq:collKBO}
\ee
with
\be
 \gamma^\E(x,x_*) = \frac{e^{-\alpha \E(x_*)}}{e^{-\alpha \E(x)} + e^{-\alpha \E(x_*)}} 
\label{eq:gammaKBO}
\ee
for some parameter $\alpha>0$. The $D(x,x_*)\in \Rd$ determines the strength of the random component and may depend on $x,x_*$ as well as on their objective values.

The binary interaction \eqref{eq:collKBO} shares exactly the same structure as the reproductive dynamics \eqref{eq:coll} of GA. In particular, every new offspring $x'$ is made of a convex combination of previous particles $(x,x_*)$ (the \textit{parents} in the genetic language) and it is randomly perturbed (\textit{mutation}) through a normally distributed random variable $\xi$.
As previously mentioned, the major difference between the two kinetic dynamics lies in where the objective function $\E$ enters.

In KBO, the particles objective values $\E(x), \E(x_*)$ are used through the weights \eqref{eq:gammaKBO} to determine the convex combination between $x,x_*$. In particular, say $x$ attains a larger objective value than $x_*$, then the new particle $x'$ is closer to $x_*$ than $x_*'$ is to $x$. Also, in KBO, the selection of parents $(x,x_*)$ is made uniformly over the particle distribution and does not depend on the objective $\E$. In kinetic theory, this is the case of Boltzmann models with Maxwell interaction between the particles \cite{pareschi13}, which leads to a kernel-free PDE of type
\be
\frac{d}{dt} \int \phi (x)\,df_{t}(x) =  \iiint  \left(\phi(x') - \phi(x)\right)\,df_t(x)\,df_t(x_*)\, d\mu(\xi)
\label{eq:boltzmannKBO}
\ee
The kinetic model \eqref{eq:boltzmann} for GA, instead, is non-Maxwellian due to the kernel $R[f]$ resulting from the parents' selection procedure. 

Under a suitable scaling of the parameters, authors in \cite{benfenati2021binary}  shows that  \eqref{eq:boltzmannKBO} reduces to a mean-field equation similar to the one derived for CBO methods. Under suitable assumptions, we show in the next section that the GA kinetic model \eqref{eq:boltzmann} also leads to a CBO-type equation, establishing an equivalence between KBO and GA in the mean-field regime.

\begin{remark}
$\,$

\begin{itemize}
\item We underline that in \cite{benfenati2021binary} a variant including also a macroscopic best estimate was also proposed.
\item Recently, authors in  \cite{albi2023kinetic} have proposed the Genetic Kinetic Based Optimization (GKBO) algorithm which combines ideas both from GA and KBO. By dividing the particles in two populations, the ``parents'' and the ``children'', in GKBO the objective function enters both in the interaction mechanism (as in KBO) and in how children become parents, and parents children (similarly to GA).
\end{itemize}
\end{remark}

\subsection{Fokker-Planck asymptotics and relation to CBO}
\label{sec:CBO}

We derive now the mean-field dynamics corresponding to the GA kinetic model \eqref{eq:boltzmann} via  the so called quasi-invariant scaling \cite{pareschi13}
\be
t \to t/\varepsilon,\quad \gamma \to \varepsilon \gamma,\quad \sigma^2 \to \varepsilon \sigma^2\,,
\label{eq:scaling}
\ee
where $\ve>0$ is a small parameter. The scaling introduced above corresponds to a dynamic in which
crossovers and mutations decrease in intensity but increase in number. 

By Taylor expansion for $\phi \in C_{0}^\infty(\Rd)$ we have
\[
\phi(x')=\phi(x) + (x'-x)\cdot \nabla_x \phi(x) + \frac12(x'-x)\cdot H[\phi](x)(x'-x)+O((x'-x)^3)
\]
where $H[\phi]$ is the Hessian matrix. We note that after scaling \eqref{eq:scaling}, it holds
\begin{equation}
x' - x = \ve \gamma \odot (x_* - x) + \sqrt{\ve}\sigma D \odot \xi\,.
\end{equation}
From the weak form \eqref{eq:boltzmann} and by Fubini's theorem, we get up to $O((x'-x)^3)$
\begin{multline*}
\frac{d}{dt} \int \phi (x)\,d f_{t}(x)  = \frac1{\ve} \iint \left( R[f_t](x,x_*) - 1 \right) \phi(x)\,df_t(dx)\,df_t(x_*)\\
+   \iint R[f_t](x,x_*) \gamma \odot (x_* - x)\cdot \nabla\phi(x)  \,df_t(x)\,df_t(x_*)\\
+  \frac12\iiint R[f_t](x,x_*)  \sigma^2 (D \odot \xi) \cdot  H[\phi](x) (D \odot \xi)  \,df_t(x)\,df_t(x_*)\, d\mu(\xi) + O(\ve)\,.
\end{multline*}
To be able to formally pass to the limit $\ve \to 0$, we must assume that for any test function $\phi $ it holds
\begin{equation*}
\int \left(R[f](x,x_*) - 1 \right) \phi(x) \,df(x)\, df(x_*) =1\,,
\end{equation*}
which can be achieved only by assuming that the reproduction kernel density is independent on $x$ 
\[R[f](x,x_*) = R[f](x_*)\,.\] 
This is somewhat contradictory with the original spirit of genetic algorithms, as it means that, out of the two parents $(x,x_*)$, only one of them, $x_*$, is chosen according to a selection mechanism, while the other, $x$, is chosen randomly among ensemble. In practice, such a reproduction kernel may mitigate the problem of premature convergence due to the excessive dominance of the fittest individuals \cite{goldberg1989}.

By passing to the limit $\ve \to 0$ and reverting to the strong form we get the mean-field model 
\be
\frac{\partial f_t}{\partial t}(x) 
+ \nabla_x \cdot \left(\mathcal{R}[f_t](x) f_t(x)\right) = \frac{\sigma^2}{2}\sum_{\ell=1}^\d \partial_{x_\ell x_\ell} (\mathcal{D}_\ell[f_t](x)f_t(x))
\label{eq:mf1}
\ee
with
\be
\mathcal{R}[f_t](x) =  \int R[f_t](x_*)\gamma\odot(x_*-x)\,df_t(x_*)
\qquad
\textup{and}
\qquad
\mathcal{D}_\ell[f_t](x) = D_\ell^2   \,.
\label{eq:mf2}
\ee
If we now assume that the reproduction function has a particular shape, we may recover the Consensus-Based Optimization (CBO) method \cite{pinnau2017consensus}. 
Consider the parents' probability distribution where the first parent is chosen randomly among the generation and the second one is chosen with respect to the Boltzmann-Gibbs probability distribution. This is given by the choice
\begin{equation}
d\Par_{CBO}[f_t](x,x_*) =  \frac{ e^{-\alpha  \E(x_*)}}{ \int e^{-\alpha\E(x_*)}df_t(x_*)} df_t(x) df_t(x_*)
\label{eq:PCBO}
\end{equation}
with corresponding kernel density 
\begin{equation*}
R_{CBO}[f_t](x_*) = \frac{ e^{-\alpha  \E(x_*)}}{ \int e^{-\alpha\E(x_*)}df_t(x_*)}\,.
\end{equation*}

Further assuming $\gamma = (\lambda, \dots, \lambda)^\top, \lambda >0$, we get 
\be
\mathcal{R}_{CBO}[f_t](x) = \lambda \int \frac{ e^{-\alpha  \E(x_*)}}{ \int e^{-\alpha\E(x_*)}df_t(x_*)}(x_* - x)\,df_t(x_*) = \lambda (m^\alpha[f_t] -x)
\ee
where, for any $f\in\mathcal{P}(\Rd)$, $m^\alpha[f]$ is the weighted mean given by
\be
\label{eq:wmean}
m^\alpha[f]:= \frac{\int x e^{-\alpha \E(x)}df(x)}{ \int e^{-\alpha\E(x)}df(x)}\,.
\ee
The resulting mean-field equation models a CBO-type evolution with Non-Degenerate diffusion (CBO-ND) and is given by
\be
\frac{\partial f_t}{\partial t}(x) 
+ \lambda \nabla_x\cdot \left((m^\alpha[f_t]-x) f_t(x)\right) = \frac{\sigma^2}{2} \sum_{\ell=1}^\d \partial_{x_\ell x_\ell}(D_\ell^2 f_t(x))\,.
\label{eq:mfCBO}
\ee

The original isotropic CBO dynamics \cite{pinnau2017consensus} can be obtained by, instead, taking a diffusion vector which depends on the overall distribution and the individual $x$, $D_{I} = D_{I}(f_t,x)$, of the particular form 
\be
D_{I,\ell}(f_t,x) = |m^\alpha[f_t] - x|\qquad \textup{for} \quad \ell=1,\dots, \d\,,
\label{eq:CBOiso}
\ee
while the anisotropic CBO diffusion \cite{carrillo2019consensus} is obtained by $D_{A} = D_{A}(f_t,x)$
\be
D_{A,\ell}(f_t,x) = (m^\alpha[f_t] - x)_\ell\qquad \textup{for} \quad \ell=1,\dots, \d\,.
\label{eq:CBOaniso}
\ee
Both choices correspond to an increase in mutations in the less fit individuals. This contradicts the elitist spirit of genetic algorithms where only the fittest survive, but it is in good agreement with the social spirit of CBO, where all points must be brought to the global minimum.

\begin{remark}
$\,$

\begin{itemize}
\item 
Despite the non-degenerate diffusion, deriving explicit steady-state solutions to \eqref{eq:mf1}-\eqref{eq:mf2} is a non-trivial task due to the nonlinear reproduction kernel $R[f_t]$. When the diffusive behavior is degenerate and linked to the reproduction kernel, as in the case of the CBO mean-field dynamics \eqref{eq:mfCBO} with \eqref{eq:CBOiso}, steady state solutions are given by any Dirac measure. We note that Gaussian steady states centered around $m^\alpha[f_t]$ can be obtained by considering the Consensus-Based Sampling dynamics suggested in \cite{carrillo2022sampling}. In Section \ref{sec:num} we will numerically investigate whether the GA particle system asymptotically converges to steady states.
\item In CBO methods, anisotropic exploration \eqref{eq:CBOaniso} has proven to be more suitable for high dimensional problems \cite{carrillo2019consensus}, and the same has been observed for KBO \cite{benfenati2021binary}. The same idea can be applied to GA by taking $D = D(x,x_*) = x_* - x$, with the consequence that the offspring's mutation strength depends on the distance between the two parents. The corresponding mean-field model takes again the form of \eqref{eq:mf1}, but with now 
\be
\mathcal{D}_\ell [f_t](x) = \int R[f_t](x_*) (x_*-x)^2_\ell \,df_t(x_*)\qquad \textup{for} \quad \ell=1,\dots, \d\,.
\label{eq:GAaniso}
\ee
\end{itemize}
\end{remark}

\section{Convergence analysis}

\label{sec:analysis}

In this section, we investigate the large-time behavior of  the kinetic process $\OX^{(k)}$ evolving according to the update rule \eqref{eq:mono}. We restrict the analysis to the case of Boltzmann Selection and mutation given by the standard Gaussian distribution ($D = I_\d$ and $\xi$ sampled according to $\mu = \mathcal{N}(0,I_\d)$ in \eqref{eq:coll}).

For any $f\in \mathcal{P}(\Rd)$, we denote its mean with
\[m[f] := \int x\, df(x)\,.\] 
We are interested in studying under which conditions the expected value $m[f_{(k)}]$ converges towards a global solution $x^\star$ to \eqref{typrob}, by looking at the large-time behavior of
\[ | m[f_{(k)}] -x^\star|  \,.\]

After stating the necessary assumptions on the objective function $\E$, we present the main theorem and a sketch of the proof. The details of the proof are then provided in the subsequent subsections.

\begin{assumption} \label{asm:E}
The objective function $\E: \Rd \to \R$ is continuous and satisfies: 
\begin{enumerate}[i)]
\item  \label{asm:unique} (solution uniqueness) there exists a unique global minimizer $x^\star$;
\item  \label{asm:growth}
(growth conditions) there exists $L_\E$, $c_u, c_l, R_l>0$ such that 
\begin{equation*}
\begin{cases}
|\E(x) - \E(y) | \leq L_\E (1+ |x| + |y|)|x - y| & \quad \forall\; x,y \in\Rd \\
\E(x) -  \E (x^\star)  \leq c_u (1 + |x|^2) & \quad \forall\; x \in\Rd \\
\E(x) - \E (x^\star) \geq c_l |x|^2 & \quad \forall\; x \,:\, |x|>R_l \,;
\end{cases}
\end{equation*}
\item  \label{asm:inverse}
(inverse continuity) there exist $c_p , p>0, R_p>0$  and lower bound $\E_\infty>0$ such that
\begin{equation*}
\begin{cases}
c_p|x - x^\star|^{p} \leq \E(x) - \E(x^\star) & \quad \forall x \,:\, |x-x^\star| \leq R_p \\
\E_\infty < \E(x) - \E(x^\star) & \quad \forall x \,:\, |x-x^\star| > R_p\,.
\end{cases}
\end{equation*}
\end{enumerate}
\end{assumption}

Before presenting the main result, let us briefly comment on the assumptions above. First of all, we note that $\E$ is only required to be continuous and it is allowed to be non-differentiable and possibly non-convex. The objective function behavior, though, needs to be controlled via upper and lower bounds. Assumption \ref{asm:E}.\ref{asm:growth}, in particular, establishes local Lipschitz continuity of $\E$ and a quadratic-like growth at infinity. This assumption will be needed to establish moments estimates for the dynamics of the individuals.
Assumption \ref{asm:E}.\ref{asm:inverse} (from which solution uniqueness follows), on the other hand, prescribes a polynomial growth around the minimizer $x^\star$. In the optimization literature, this is known as \textit{inverse continuity} \cite{fhps20-2} or \textit{p-conditioning} \cite{rosasco2017geometry} property and it will be essential to apply the so-called Laplace principle \cite{hwang1980laplace}, see below for more details.

\begin{theorem} Let $\E$ satisfy Assumption \ref{asm:E}. Let $\OX^{(0)}$ be distributed according to a given $f_{(0)} \in \mathcal{P}_2(\Rd)$ such that $x^\star \in \textup{supp}(f_{(0)})$, and $\OX^{(k)}$ be updated according to \eqref{eq:mono} with Boltzmann Selection and $D=I_\d, \overline{\xi}^{(k)} \sim \mathcal{N}(0,I_\d)$.

Fix an arbitrary accuracy $\eps>0$ and $T^\star$ be the time horizon given by
\be
T^\star := \log\left(\frac{2|m[f_{(0)}] - x^\star|}{\eps}\right)\,.
\ee
Then, there exists $\alpha>0$ sufficiently large such that
\be 
\min_{k: k\tau \leq T^\star}\left | m[f_{(k)}] - x^\star \right | \leq \eps\,.
\ee 
Furthermore, until the desired accuracy is reached, it holds
\be
|m[f_{(k)}] - x^\star | \leq e^{-k\tau} | m[f_{(0)}] - x^\star|\,.
\ee
\label{t:main}
\end{theorem}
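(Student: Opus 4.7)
The plan is to track the first moment $m[f_{(k)}]$ only, exploit the product structure of Boltzmann Selection to decouple the two parents' contributions, and close the analysis with a quantitative Laplace principle in the spirit of consensus-based optimization. The factorisation $d\Par_{BS}[f] = df^\alpha\otimes df^\alpha$, with $df^\alpha(x)\propto e^{-\alpha\E(x)}df(x)$, is the key ingredient: both parents are identically distributed, so the crossover weights $(1-\gamma)$ and $\gamma$ sum to the identity and the offspring mean equals $m^\alpha[f_{(k)}]$ as defined in \eqref{eq:wmean}.

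First I would take $\phi(x) = x$ in the kinetic weak form \eqref{eq:weak}. Since $\overline{\xi}^{(k)}$ is centred and the two parents are i.i.d.\ $f^\alpha_{(k)}$, this yields the linear update $m[f_{(k+1)}] = (1-\tau)\,m[f_{(k)}] + \tau\,m^\alpha[f_{(k)}]$ and hence, by the triangle inequality,
\begin{equation*}
|m[f_{(k+1)}] - x^\star| \;\leq\; (1-\tau)\,|m[f_{(k)}] - x^\star| + \tau\,|m^\alpha[f_{(k)}] - x^\star|\,,
\end{equation*}
reducing convergence to a uniform-in-$k$ control of the Laplace bias $|m^\alpha[f_{(k)}] - x^\star|$. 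In parallel I would propagate a uniform-in-$k$ second-moment bound $M_2(k) := \int |x|^2\,df_{(k)}(x) \leq M^\star$ by inserting $\phi(x)=|x|^2$ into \eqref{eq:weak}, using the componentwise convexity inequality for the crossover map $\mathcal{C}$ and the two-sided growth bounds in Assumption \ref{asm:E}.\ref{asm:growth} to absorb the Gibbs-weighted second moment into an affine recursion in $M_2(k)$ whose linear coefficient is $(1-\tau)$ plus a small correction.

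Next I would prove a quantitative Laplace principle: under $x^\star\in\supp(f_{(k)})$ and Assumption \ref{asm:E}.\ref{asm:inverse}, for every $\eta>0$ there exists $\alpha_\eta>0$ such that $|m^\alpha[f_{(k)}] - x^\star|\leq \eta$ for all $k$, via an estimate of the form $r + C(M^\star)\,e^{-\alpha\, c_p r^p}$ to be optimised over $r\in(0,R_p]$. The near-field contribution uses $|x-x^\star|\leq r$; the far-field is controlled by the $p$-conditioning growth inside $B_{R_p}(x^\star)$ and by the floor $\E_\infty$ outside, together with a lower bound on the partition function coming from $x^\star\in\supp(f_{(k)})$. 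The support hypothesis is propagated from $f_{(0)}$ because the Gaussian mutation in \eqref{eq:coll} convolves every generation with a full-support kernel, so $\supp(f_{(k)}) = \Rd$ for $k\geq 1$.

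Combining the three ingredients yields $|m[f_{(k+1)}] - x^\star| \leq (1-\tau)\,|m[f_{(k)}] - x^\star| + \tau\,\eta(\alpha)$. Choosing $\alpha$ so that $\eta(\alpha)$ is small compared to $\tau\eps$, and bootstrapping while $|m[f_{(k)}] - x^\star|>\eps$, the slack $e^{-\tau}-(1-\tau) = \tau^2/2 + O(\tau^3) > 0$ absorbs the $\tau\,\eta(\alpha)$ term and upgrades the recursion to $|m[f_{(k+1)}] - x^\star| \leq e^{-\tau}\,|m[f_{(k)}] - x^\star|$, which iterates to the claimed exponential decay; since the right-hand side drops below $\eps$ precisely at $k\tau=T^\star$, the accuracy is attained within the horizon. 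The main obstacle I expect is the coupling between the two technical estimates: the constant $C(M^\star)$ in the Laplace bound must be explicit enough that a single, $k$-independent $\alpha$ works throughout $[0,T^\star]$ rather than a time-varying one, and the second-moment propagation must handle the Gibbs-weighted mean $m^\alpha[f_{(k)}]$ without any pointwise comparison with $m[f_{(k)}]$ — both of which are the natural discrete-in-time analogues of estimates familiar from CBO analysis but have to be re-derived on the discrete Boltzmann-type recursion.
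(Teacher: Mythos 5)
Your overall architecture coincides with the paper's: track only the first moment via $m[f_{(k+1)}]=(1-\tau)m[f_{(k)}]+\tau m^\alpha[f_{(k)}]$, control the second moment, and close with the quantitative Laplace principle of \cite{fornasier2021consensusbased}. However, there is one genuine gap and one incorrect claim. The gap is the quantitative lower bound on the mass near the minimizer. The Laplace estimate \eqref{eq:laplace} carries the factor $1/f_{(k)}(B_r(x^\star))$, so to choose a single $\alpha$ valid for all $k\tau\in[0,T^\star]$ you need $f_{(k)}(B_r(x^\star))\geq \delta>0$ uniformly on the horizon, with $\delta$ \emph{independent of $\alpha$} --- otherwise the argument is circular, since $\alpha$ is chosen after $\delta$. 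Your appeal to ``$\supp(f_{(k)})=\Rd$ because the Gaussian mutation convolves with a full-support kernel'' is purely qualitative: it does not exclude $f_{(k)}(B_r(x^\star))\to 0$ as $k$ grows, nor does it show the bound is $\alpha$-free. You flag this as ``the main obstacle I expect'' but do not resolve it; it is precisely the content of the paper's Proposition \ref{p:mass}, which conditions on both parents lying in a ball $B_{R'}(0)$ carrying at least half of the Gibbs mass (Lemmas \ref{l:massR}--\ref{l:gibbsmassR}) and then lower-bounds the probability that the Gaussian mutation lands the offspring in $B_r(x^\star)$ by an explicit $\delta_r$ depending on $R',r,\sigma,\d$ but not on $\alpha$, yielding $f_{(k+1)}(B_r(x^\star))\geq\min\{f_{(k)}(B_r(x^\star)),\delta_r\}$.

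The incorrect claim is the uniform-in-$k$ second-moment bound $M_2(k)\leq M^\star$ with a recursion coefficient ``$(1-\tau)$ plus a small correction.'' Inserting $\phi(x)=|x|^2$ and using the estimate $\int|x|^2 d\eta^\alpha\leq b_1+b_2\int|x|^2 df$ from \cite[Lemma 3.3]{carrillo2018analytical} gives the coefficient $1+\tau(4b_2-1)$ with $b_2\geq 2c_u/c_l$, which in general exceeds $1$; the recursion is not contractive and the second moment may grow like $e^{k\tau q_1}$. This is harmless for the theorem --- only a finite bound on $[0,T^\star]$ is needed, which the discrete Gr\"onwall argument supplies --- but your proposed route to $M^\star$ as stated would fail.
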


Before delving into the details, let us outline the steps of the proof.  For notational simplicity, consider the Boltzmann-Gibbs distribution $\eta_{(k)}^\alpha$ associated with $f_{(k)}$:
\begin{equation*}
d\eta_{(k)}^\alpha(x)  := \frac{e^{-\alpha\E(x)}}{Z_{\alpha}[f_{(k)}]}\, df_{(k)}(x) \quad \textup{with} \quad Z^\alpha [f_{(k)}] := \int e^{-\alpha\E(x)}\, df_{(k)}(x)\,.
\label{eq:gibbs}
\end{equation*}
With this notation, the parents' probability is given by
$ \Par[f_{(k)}]=\eta_{(k)}^\alpha \otimes \eta_{(k)}^\alpha$,
and the law $f_{(k)}$ of the mono-particle process \eqref{eq:mono} satisfies for any measurable $\phi$
\begin{equation}
\int \phi (x) df_{(k+1)}(x) = (1 - \tau)\int \phi (x) \,df_{(k)}(x) + \tau \iiint \phi(x')\,
d\eta^\alpha_{(k)}(x)\,d\eta^\alpha_{(k)}(x_*)d\mu(\xi)\,.
\label{eq:BSweak}
\end{equation}
with $\mu = \mathcal{N}(0,I_\d)$.

With the choice $\phi(x)=x$, we note that the expected value moves towards the weighted average $m^\alpha[f_{(k)}] = m[\eta_{(k)}^\alpha]$ (already defined in \eqref{eq:wmean}) as it holds
\be
m[f_{(k+1)}] = (1- \tau) m[f_{(k)}] + \tau m^\alpha[f_{(k)}]\,.
\notag
\ee
Thanks to the quantitative version of Laplace principle (Proposition \ref{p:laplace}) derived in \cite{fornasier2021consensusbased},  it holds
\[m^\alpha[f_{(k)}] \approx x^\star \quad \textup{for} \quad \alpha \gg 1\,, \]
leading to an exponential of $m[f_{(k)}]$ towards $x^\star$. 

Let $B_r(x)$ denote the closed ball centered at $x$ of radius $r$. To apply the Laplace principle uniformly over the time window $[0,T^\star]$, though, we need to provide lower bounds on the mass around the minimizer $x^\star$. To this end, the diffusive behavior given by the mutation mechanism will be a key property of the dynamics allowing us to prove that
\[f_{(k)} (B_r(x^\star)) \geq \delta>0 \quad \textup{for all} \quad k\tau \in [0,T^\star]\,, \]
for some small radius $r>0$. In Section \ref{sec:an:moments} we collect such a lower bound, while in Section \ref{sec:an:laplace} we recall the Laplace principle and show of a proof for Theorem \ref{t:main}.

\begin{remark} $\,$
\begin{itemize}
\item 
The proof makes use of the Laplace principle and therefore it relies on the particular structure of the Boltzmann-Gibbs distribution appearing in the selection mechanism. We leave the convergence analysis for different selection mechanisms, especially those based on the particles rank, to future work.
\item
While Assumption \ref{asm:E} is similar to the one required for the analysis of CBO methods there is a main difference between Theorem \ref{t:main} and the analogous CBO convergence result in \cite{fornasier2021consensusbased}.
That is, we study the evolution of $|m[f_{(k)}] - x^\star|$, instead of the expected mean squared error
\[ \int |x - x^\star|^2 df_{(k)}(x)\,.\] 

It is the structure of the diffusive component that determines which notion of error is convenient to use for the analysis. As the mutation mechanism is non-degenerate, we cannot expect the mean squared error to become arbitrary small in genetic algorithms.
\item 
Another approach suggested in \cite{carrillo2018analytical} is also common in the analysis kinetic and mean-field models in optimization, see e.g. \cite{albi2023kinetic,benfenati2021binary,carrillo2019consensus}. The approach consists of first establishing asymptotic convergence of the particles towards a point, by studying the evolution of the system variance. Then, one investigates how far such point is from the global minimizer. 
Typically, this approach leads to better convergence results in the Fokker-Planck limit $\ve \to 0$ for the KBO kinetic model \cite{benfenati2021binary}, suggesting better performance of mean-field type dynamics even in the case of the
kinetic GA model analyzed in Section \ref{sec:CBO}.
\end{itemize}
\label{rmk:theorem}
\end{remark}

\subsection{Estimates of mass around minimizer}
\label{sec:an:moments}

Due to the mutation mechanism with Gaussian noise $\overline \xi^{(k)}$, the support of the  kinetic probability measure $f_{(k)}$ is expected to be the full search space $\Rd$ at all steps $k \geq 1$. This means, in particular, that $x^\star \in \supp(f_{(k)})$. To quantify how large the parameter $\alpha$ should be in order to apply the Laplace principle, though, we need to derive quantitative estimates of the mass around the minimizer $x^\star$. By first studying the evolution of the second moments of $f_{(k)}$ we show in the following that most of the probability mass lies on a ball $B_R(0)$ for some possibly large $R>0$. Thanks to the Gaussian mutation, we than show that $f_{(k)}(B_r(x^\star))\geq \delta$ for some possibly small $\delta,r>0$.

\begin{lemma}
\label{l:massR}
 Let $\E$ satisfy Assumption \ref{asm:E}.\ref{asm:growth} and $f_{(0)}\in \mathcal{P}_2(\Rd)$. Assume $\alpha>1$ and consider an arbitrary $\overline \delta\in (0,1)$. For a sufficiently large radius $R=R(\overline \delta, T^\star, c_u,c_l,R_l,\sigma)$ it holds
   \[ f_{(k)}(B_R(0)) \geq 1- \overline \delta \qquad \text{for all}\quad k \tau  \in [0, T^\star]\,. \]
    \end{lemma}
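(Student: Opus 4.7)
The plan is to control the second moment $M_{(k)} := \int |x|^2\,df_{(k)}(x)$ uniformly over the horizon $k\tau\in[0,T^\star]$ and then recover the mass bound by Markov's inequality. I denote by $\eta^\alpha_{(k)}$ the Boltzmann-Gibbs distribution associated to $f_{(k)}$ and use the weak update \eqref{eq:BSweak}.

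My first step is to insert $\phi(x)=|x|^2$ into \eqref{eq:BSweak}. With $x'=(1-\gamma)\odot x+\gamma\odot x_*+\sigma\xi$, expanding $|x'|^2$ and integrating against $\mu=\mathcal{N}(0,I_\d)$ kills the $\xi$-linear cross term and turns $\sigma^2|\xi|^2$ into $\sigma^2\d$. The remaining crossover square is then integrated against $\eta^\alpha_{(k)}\otimes\eta^\alpha_{(k)}$: expanding coordinate-by-coordinate and using independence of $x,x_*$ under the product measure, together with $(m^\alpha_\ell[f_{(k)}])^2\le\int x_\ell^2\,d\eta^\alpha_{(k)}$ (Jensen applied to $t\mapsto t^2$) and $\gamma_\ell\in[0,1]$, one obtains
\[
\iint|(1-\gamma)\odot x+\gamma\odot x_*|^2\,d\eta^\alpha_{(k)}(x)\,d\eta^\alpha_{(k)}(x_*)\le \int|x|^2\,d\eta^\alpha_{(k)}.
\]
Setting $\tilde M_{(k)}:=\int|x|^2\,d\eta^\alpha_{(k)}$, this yields the recursion
\[
M_{(k+1)}\le (1-\tau)M_{(k)}+\tau\,\tilde M_{(k)}+\tau\sigma^2\d.
\]

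The hard part, and the step I expect to be the main obstacle, is bounding $\tilde M_{(k)}$ affinely in $M_{(k)}$ with constants independent of $\alpha>1$; a naive estimate through $\|e^{-\alpha\E}/Z_\alpha[f_{(k)}]\|_\infty$ or Jensen on the partition function produces an exponential blow-up in $\alpha M_{(k)}$. To avoid this I would split the argument in two. From the growth conditions one has the pointwise inequality $|x|^2\le R_l^2+c_l^{-1}(\E(x)-\E(x^\star))$ on all of $\Rd$ (combining the trivial bound on $B_{R_l}(0)$ with the quadratic lower bound outside), so it suffices to control $\int(\E(x)-\E(x^\star))\,d\eta^\alpha_{(k)}(x)$. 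For this I would invoke Gibbs tilting monotonicity: a direct computation gives
\[
\frac{d}{d\alpha}\int(\E(x)-\E(x^\star))\,d\eta^\alpha_{(k)}(x)=-\mathrm{Var}_{\eta^\alpha_{(k)}}(\E)\le 0,
\]
so the map is nonincreasing in $\alpha$ and bounded above by its value at $\alpha=0$, namely $\int(\E(x)-\E(x^\star))\,df_{(k)}(x)\le c_u(1+M_{(k)})$ by the upper growth. Altogether this produces the $\alpha$-free estimate
\[
\tilde M_{(k)}\le R_l^2+\tfrac{c_u}{c_l}\bigl(1+M_{(k)}\bigr).
\]

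Plugging this into the previous recursion produces a linear difference inequality of the form $M_{(k+1)}\le (1+\tau A)M_{(k)}+\tau B$ with $A,B$ depending only on $(c_u,c_l,R_l,\sigma,\d)$. A discrete Grönwall argument over the $\lfloor T^\star/\tau\rfloor$ iterations then delivers a uniform bound $M_{(k)}\le\bar M$ for all $k$ with $k\tau\le T^\star$, where $\bar M=\bar M(T^\star,c_u,c_l,R_l,\sigma,\d,M_{(0)})$ is independent of $\alpha$. Markov's inequality $f_{(k)}(\Rd\setminus B_R(0))\le M_{(k)}/R^2$ then closes the proof upon choosing $R=\sqrt{\bar M/\overline\delta}$.
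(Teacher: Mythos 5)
Your proof is correct and its skeleton coincides with the paper's: a second-moment recursion obtained from the weak update \eqref{eq:BSweak} with $\phi(x)=|x|^2$, a discrete Gr\"onwall iteration over $k\tau\le T^\star$, and Markov's inequality to convert the uniform moment bound into the mass estimate. The one step where you genuinely diverge is the crucial estimate $\int|x|^2\,d\eta^\alpha_{(k)}\lesssim 1+\int|x|^2\,df_{(k)}$ with constants independent of $\alpha$. The paper simply imports this as a black box from \cite[Lemma 3.3]{carrillo2018analytical}, with the explicit constants $b_1=R_l+b_2$, $b_2=2\tfrac{c_u}{c_l}\bigl(1+\tfrac{1}{\alpha c_l R_l^2}\bigr)$ (whence the hypothesis $\alpha>1$ to make them $\alpha$-free). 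You instead rederive it from scratch: the pointwise inequality $|x|^2\le R_l^2+c_l^{-1}(\E(x)-\E(x^\star))$ reduces the problem to the Gibbs energy $\int(\E-\E(x^\star))\,d\eta^\alpha_{(k)}$, and the identity $\tfrac{d}{d\alpha}\,\mathbb{E}_{\eta^\alpha}[\E]=-\mathrm{Var}_{\eta^\alpha}(\E)\le 0$ bounds it by its value at $\alpha=0$, i.e.\ by $c_u(1+M_{(k)})$. This is self-contained, gives cleaner constants ($c_u/c_l$ versus $2c_u/c_l(1+\dots)$, plus your sharper convexity bound on the crossover term, $1$ in place of the paper's $4$), and works for every $\alpha>0$ rather than only $\alpha>1$. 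Two points you should make explicit to be fully rigorous: (i) the monotonicity argument needs $\mathrm{Var}_{\eta^{\alpha'}}(\E)<\infty$ for $\alpha'\in(0,\alpha)$ and a limiting argument as $\alpha'\to 0^+$, since $\int\E^2\,df_{(k)}$ need not be finite for $f_{(k)}\in\mathcal{P}_2(\Rd)$ --- this is fine because for $\alpha'>0$ the quadratic lower growth of $\E$ makes $\E^2 e^{-\alpha'\E}$ bounded, and dominated convergence recovers the endpoint value $\int\E\,df_{(k)}$; (ii) the recursion presupposes $M_{(k)}<\infty$, which you should close by induction (it follows from the recursion itself starting from $f_{(0)}\in\mathcal{P}_2(\Rd)$). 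Neither is a gap in substance.
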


\begin{proof} We restart by recalling the following estimate from \cite[Lemma 3.3]{carrillo2018analytical}. For any $f \in \mathcal{P}_2(\Rd)$ with correspondent Boltzmann-Gibbs distribution $\eta^\alpha$, it holds
\[\int |x|^2d\eta^{\alpha}(x) \leq b_1 + b_2 \int |x|^2 df(x) \] 
with
\[ b_1 = R_l  +b_2, \quad b_2 = 2 \frac{c_u}{c_l}\left(1 + \frac{1}{\alpha c_l}\frac1{R_l^2}   \right)  \,.\]
We note that the above estimate can be made independent on $\alpha$ since we assumed $\alpha>1$.
Consider now $f_{(k)}$, $k \geq 1$. From the weak formulation \eqref{eq:BSweak}
\begin{align*}
\int |x|^2 df_{(k)}(x)  &= (1-\tau)\int|x|^2 df_{(k-1)}(x) + \tau \iiint |x'|^2 d\eta^\alpha_{(k-1)}(x)d\eta^\alpha_{(k-1)}(x_*) d\mu(\xi) \\
&\leq (1-\tau)\int|x|^2 df_{(k-1)}(x) + 4\tau \int |x|^2 d\eta^\alpha_{(k-1)}(x) + 4\sigma^2 \tau \\ 
&\leq (1-\tau)\int|x|^2 df_{(k-1)}(x) + 4\tau \left(b_1 + b_2 \int |x|^2 df_{(k-1)}(x)\right) + 4\sigma^2 \tau  \\
& =: (1 + q_1\tau)\int|x|^2 df_{(k-1)}(x) + q_2 \tau
\end{align*}
with $q_1 = 4b_2 - 1$ and $q_2= 4 b_1 + 4\sigma^2$. By iteratively applying the above estimate, we can derive a discrete G{\"o}nwall-like inequality
\begin{align*}
\int|x|^2 df_{(k)}(x) & \leq (1 + q_1 \tau)^k \int|x|^2 df_{(0)}(x) + q_2 \tau \sum_{h=0}^{k-1} (1 + q_1 \tau)^h \\
 & = (1 + q_1 \tau)^k \int|x|^2 df_{(0)}(x) + \tau q_2 \frac{1 - (1+q_1 \tau)^k}{1 - (1+q_1\tau)}\\
 & = (1 + q_1 \tau)^k \int|x|^2 df_{(0)}(x) + \frac{q_2}{q_1}( (1+q_1 \tau)^k - 1  )\\
 & \leq e^{k\tau q_1} \int|x|^2 df_{(0)}(x)+ \frac{q_2}{q_1}\left(e^{k\tau q_1} - 1 \right )\,.
\end{align*}
For any $R>0$, by Markov's inequality it holds
\[ 
\mathbb{P}(|\overline{X}_{(k)}| \geq R) \leq \frac1{R^2} \mathbb{E}[|\overline{X}_{(k)}|^2]\,,
\] 
leading to
\begin{align*}
\mathbb{P}(|\overline{X}_{(k)}| < R) 
  \geq 1  - \frac1{R^2} \mathbb{E}[|\overline{X}_{(k)}|^2]  \geq  1 - \frac1{R^2} \left(e^{T^\star q_1} \int|x|^2 df_{(0)}(x)+ \frac{q_2}{q_1}\left(e^{T^\star q_1} - 1 \right ) \right)
\end{align*}
for all $k\tau\in [0,T^\star]$. We conclude by taking $R>0$ sufficiently large with respect to $\overline \delta$.
\end{proof}

Next, we provide an analogous lower bound for $\eta^\alpha_{(k)}$, that is, the Gibbs-Boltzmann probability measure associated with $f_{(k)}$ and the objective function $\E$.

\begin{lemma} 
\label{l:gibbsmassR}
Let $\E$ satisfy Assumption \ref{asm:E}.\ref{asm:growth} and let $f_{(k)}(B_R(0))\geq 1- \overline\delta$ for all $k\tau\in [0,T^\star]$ for some $R>R_l, \overline \delta >0$. Then, there exists $R',\alpha$ sufficiently large which depend on $\{\overline \delta, T^\star, c_u,c_l,R_l,\sigma\}$ such that 
\[  \eta^\alpha_{(k)} \left( B_{R'}(0) \right) \geq 1- \overline{\delta} \qquad \text{for all}\quad  k \tau \in[0, T^\star]\,. \] 
\end{lemma}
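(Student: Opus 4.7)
The plan is to show that the Gibbs reweighting $e^{-\alpha \E(x)}/Z^\alpha[f_{(k)}]$ applied to $f_{(k)}$ cannot spread mass much farther than $B_R(0)$, thanks to the quadratic growth of $\E$ at infinity provided by Assumption \ref{asm:E}.\ref{asm:growth}. Writing the tail probability as
\[
\eta^\alpha_{(k)}(B_{R'}(0)^c) \;=\; \frac{\int_{|x|>R'} e^{-\alpha(\E(x)-\E(x^\star))}\,df_{(k)}(x)}{\int e^{-\alpha(\E(x)-\E(x^\star))}\,df_{(k)}(x)},
\]
the numerator and denominator can each be controlled explicitly, and the estimate will follow by choosing $R'$ and $\alpha$ large.

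For the numerator, I would take $R' \geq R_l$ and use the lower growth bound $\E(x)-\E(x^\star) \geq c_l |x|^2$ for $|x|>R_l$. This immediately gives
\[
\int_{|x|>R'} e^{-\alpha(\E(x)-\E(x^\star))}\,df_{(k)}(x) \;\leq\; e^{-\alpha c_l R'^2}\, f_{(k)}(B_{R'}(0)^c) \;\leq\; e^{-\alpha c_l R'^2}.
\]
For the denominator, I would restrict the integration to $B_R(0)$ and use the upper growth bound $\E(x)-\E(x^\star) \leq c_u(1+|x|^2) \leq c_u(1+R^2)$, combined with the hypothesis $f_{(k)}(B_R(0)) \geq 1-\overline\delta$, obtaining
\[
\int e^{-\alpha(\E(x)-\E(x^\star))}\,df_{(k)}(x) \;\geq\; e^{-\alpha c_u(1+R^2)}\,(1-\overline\delta).
\]
Combining these two bounds yields
\[
\eta^\alpha_{(k)}(B_{R'}(0)^c) \;\leq\; \frac{1}{1-\overline\delta}\, e^{\alpha\bigl(c_u(1+R^2)-c_l R'^2\bigr)}.
\]

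To conclude, I would first select $R' > R_l$ large enough — depending on $c_u,c_l,R$ — so that $c_l R'^2 > c_u(1+R^2)$ (for definiteness one may ask $c_l R'^2 \geq 2 c_u(1+R^2)$). Then, for any such $R'$, taking $\alpha$ sufficiently large, depending on $R,R',\overline\delta$, makes the right-hand side $\leq \overline\delta$, uniformly in $k\tau\in[0,T^\star]$ since the estimates only use the hypothesis $f_{(k)}(B_R(0))\geq 1-\overline\delta$ which holds uniformly on this window via Lemma \ref{l:massR}. Note that the final dependence of $R',\alpha$ on $\{\overline\delta,T^\star,c_u,c_l,R_l,\sigma\}$ arises through $R$, which itself was produced by Lemma \ref{l:massR}.

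There is no genuine obstacle here: the argument is a clean two-sided use of the growth assumption, and no Laplace-type refinement is required because we only need a bound on the mass of the reweighted measure, not on the location of its barycenter. The only subtlety is choosing the constants in the right order — $R'$ first to make the exponent negative, then $\alpha$ to make it as negative as needed — and keeping track of the fact that the bound is independent of $k$ on the whole time window thanks to the uniform estimate provided by the previous lemma.
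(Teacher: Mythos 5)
Your proof is correct and follows essentially the same route as the paper: both bound the tail mass $\eta^\alpha_{(k)}(B_{R'}(0)^c)$ by a ratio whose numerator is controlled via the lower quadratic growth $\E(x)-\E(x^\star)\geq c_l|x|^2$ on $B_{R'}(0)^c$ and whose denominator (the normalizing constant) is bounded below by restricting to $B_R(0)$, using the upper bound $c_u(1+R^2)$ and the hypothesis $f_{(k)}(B_R(0))\geq 1-\overline\delta$, after which one picks $R'$ to make the exponent negative and then $\alpha$ large. The only cosmetic difference is that the paper phrases the two growth estimates through $\sup_{B_R(0)}\E$ and $\inf_{B_{R'}^c(0)}\E$ before invoking Assumption \ref{asm:E}.\ref{asm:growth}, whereas you apply the growth bounds pointwise directly.
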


\begin{proof} We note that the following computations are similar to the ones performed in \cite[Lemma 3.3]{carrillo2018analytical} and \cite[Proposition 21]{fornasier2021consensusbased}. By Markov's inequality we have
\begin{align*}
Z^\alpha[f_{(k)}]& \geq \exp \left(-\alpha \sup_{x \in B_R(0)}\E(x)\right) f_{(k)}\left(\left \{x\,:\, \E(x) \leq \sup_{x \in B_R(0)}\E(x)\right \} \right)  \\
& \geq  \exp\left(-\alpha \sup_{x \in B_R(0)}\E(x) \right )  f_{(k)} \left(B_R(0) \right)\,.
\end{align*}

For any $R'>R$, it holds for all $k\tau \in [0, T^\star]$
\begin{align*}
\eta^\alpha_{(k)} \left( B_{R'}^c (0) \right)  & = \int_{B_{R'}^c} \frac{e^{-\alpha \E(x)}}{Z^\alpha[f_{(k)}]}df_{(k)}(x)\\
&\leq  \frac{ \int_{B_{R'}^c}  e^{-\alpha \E(x)}df_{(k)}(x)}{ \exp\left(-\alpha \sup_{x \in B_R(0)}\E(x) \right )  f_{(k)} \left(B_R(0) \right)}\\
&\leq  \frac { \exp(-\alpha \left (\inf_{x \in B^c_{R'}(0)}\E(x)\right) f_{(k)}\left(B_{R'}^c(0)\right)}
{ \exp\left(-\alpha \sup_{x \in B_R(0)}\E(x) \right )  f_{(k)} \left(B_R(0) \right)} \\
& \leq \frac{1}{1-\overline{\delta}} \exp\left (\alpha \left(\sup_{x \in B_R(0)}\E(x) -\inf_{x \in B^c_{R'}(0)}\E(x)  \right)   \right)
\end{align*}
where we used $f_{(k)}(B_{R'}^c) \leq 1$ and the assumption $f_{(k)} \left(B_R(0) \right) \geq 1-\overline \delta$ .

Thanks to the growth conditions in Assumption \ref{asm:E}.\ref{asm:growth} and for $R'> R> R_l$, it holds
\begin{align*} 
\sup_{x \in B_R(0)}\E(x) -\inf_{x \in B^c_{R'}(0)}\E(x)  & = \sup_{x \in B_R(0)}\E(x) -\E(x^\star)+ \E(x^\star)-\inf_{x \in B^c_{R'}(0)}\E(x)
\\
& \leq c_u ( 1 + R^2) - c_l (R')^2   \leq - 1
\end{align*}
for $R'$ sufficiently large with respect to $R, c_u$, and $c_l$. This leads to 
\[\eta^\alpha_{(k)} \left( B_{R'}^c (0) \right) \leq \frac{e^{-\alpha}}{1- \overline\delta}\,. \] 
By taking $\alpha$ sufficiently large with respect to $\overline \delta$ we obtain the desired lower bound for $\eta^\alpha_{(k)} ( B_{R'} (0))$ for all $k\tau \in [0,T^\star]$.
\end{proof}

Finally, we are ready to give a quantitative estimate for the mass in a neighborhood of the minimizer $x^\star$.

\begin{proposition}
\label{p:mass}
Under the assumptions of Theorem \ref{t:main}, fix a radius $r>0$. 
Provided $\alpha=\alpha(r)$ is sufficiently large, it holds
\begin{equation} \notag
f_{(k)}(B_r(x^\star)) \geq  \min\left\{f_{(0)}(B_r(x^\star))\,,\, \delta_r\right\}\,,  \quad \text{for all}\quad k\tau \in[0,T^\star]
\end{equation}
and for some positive $\delta_r =\delta_r(r,\d, T^\star,c_u,c_l,R_l,\sigma) $.
\end{proposition}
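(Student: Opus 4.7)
The strategy is to exploit the non-degenerate Gaussian mutation, which acts as a built-in diffusion sending particles to any neighborhood of $x^\star$ with positive probability, combined with the mass-concentration estimates on $f_{(k)}$ and $\eta^\alpha_{(k)}$ already provided by Lemmas~\ref{l:massR} and~\ref{l:gibbsmassR}. Testing \eqref{eq:BSweak} against the indicator $\phi = \mathbb{1}_{B_r(x^\star)}$, the sequence $a_k := f_{(k)}(B_r(x^\star))$ will satisfy a linear recurrence of the form $a_{k+1} \geq (1-\tau)\,a_k + \tau\,\delta_r$, from which the claim follows by a one-step induction.

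Concretely, I would proceed as follows. \emph{Step 1.} Fix $\overline{\delta} \in (0,1)$ (for definiteness, $\overline{\delta} = 1/2$) and apply Lemma~\ref{l:massR} followed by Lemma~\ref{l:gibbsmassR} to produce a radius $R' > 0$ and a threshold on $\alpha$ such that $\eta^\alpha_{(k)}(B_{R'}(0)) \geq 1-\overline{\delta}$ uniformly in $k\tau \in [0,T^\star]$, with $R'$ depending only on $(\overline\delta, T^\star, c_u, c_l, R_l, \sigma)$. \emph{Step 2.} For parents $(x,x_*) \in B_{R'}(0)^2$, the crossover point $y := (1-\gamma)\odot x + \gamma \odot x_*$ is a componentwise convex combination (since $\gamma \in [0,1]^\d$), so $|y|^2 \leq |x|^2 + |x_*|^2 \leq 2(R')^2$.

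\emph{Step 3.} With $D = I_\d$ and $\overline{\xi}^{(k)} \sim \mathcal{N}(0,I_\d)$, the offspring reads $x' = y + \sigma\, \overline{\xi}^{(k)}$, and
\begin{equation*}
\int \mathbb{1}_{B_r(x^\star)}(x')\, d\mu(\xi) \;=\; \mathbb{P}\bigl(\xi \in B_{r/\sigma}((x^\star - y)/\sigma)\bigr) \;\geq\; c,
\end{equation*}
where $c = c(r,\d,\sigma,R',|x^\star|) > 0$ is obtained by bounding the standard normal density from below on the integration ball (whose center has norm at most $(|x^\star| + \sqrt{2}\,R')/\sigma$ and whose radius is $r/\sigma$) and multiplying by its Lebesgue volume; this lower bound is uniform in $y \in B_{\sqrt{2}R'}(0)$. \emph{Step 4.} Plugging Steps 1--3 into \eqref{eq:BSweak} with $\phi = \mathbb{1}_{B_r(x^\star)}$ gives
\begin{equation*}
a_{k+1} \;\geq\; (1-\tau)\, a_k \;+\; \tau\, c\,(1-\overline{\delta})^2 \;=:\; (1-\tau)\, a_k + \tau\, \delta_r .
\end{equation*}
A one-step induction on the auxiliary quantity $b_k := a_k - \delta_r$, which satisfies $b_{k+1} \geq (1-\tau)\,b_k$ and hence keeps the sign of $b_0$ while $|b_k|$ is nonincreasing when $b_0 \leq 0$, yields $a_k \geq \min\{a_0, \delta_r\}$ for all $k\tau \in [0,T^\star]$, as required.

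The only technical step is the uniform Gaussian lower bound of Step~3: once $R'$ has been fixed by the two preceding lemmas, it reduces to a standard explicit integration of the multivariate normal density over a ball of radius $r/\sigma$ with bounded center, and it delivers $\delta_r$ with the stated dependence on $(r, \d, T^\star, c_u, c_l, R_l, \sigma)$ via the dependence of $R'$ and the prefactor $(1-\overline{\delta})^2$. The use of an indicator as test function in \eqref{eq:BSweak} is admissible since the weak formulation is stated for any measurable $\phi$; alternatively, one may first apply the argument to a smooth bump approximation and then pass to the limit.
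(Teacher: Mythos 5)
Your proposal is correct and follows essentially the same route as the paper's proof: test \eqref{eq:BSweak} with the indicator of $B_r(x^\star)$, invoke Lemmas \ref{l:massR} and \ref{l:gibbsmassR} with $\overline\delta = 1/2$ to confine the parents to $B_{R'}(0)$, bound the Gaussian mutation's probability of landing in $B_r(x^\star)$ from below by an explicit constant times $\mathcal{L}^{\d}(B_{r/\sigma}(0))$, and close via the recurrence $a_{k+1}\geq(1-\tau)a_k+\tau\delta_r$. The constants match (your $(1-\overline\delta)^2 = 1/4$ is exactly the paper's factor), and your remark on admissibility of indicator test functions is a harmless extra precaution.
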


\begin{proof}
For a given set $A \subset \Rd$, let $\mathbf{1}_A$ denote its indicator function. From the weak formulation \eqref{eq:BSweak} with $\phi = \mathbf{1}_{B_r(x^\star)}(x)$, we get 
\begin{equation}
\label{eq:weakindicator}
f_{(k+1)}(B_r(x^\star)) =(1- \tau)f_{(k)}(B_r(x^\star)) + \tau \iiint \mathbf{1}_{B_r(x^\star)}(x')d\eta^\alpha_{(k)}(x) d\eta^\alpha_{(k)}(x_*) d\mu(\xi) \,.
\end{equation} 
For the couple of parents $(x,x_*)\in\Rd\times \Rd$, we introduce the set
\[ \Xi_{x,x_*} = \{\xi \in \Rd\;:\;x' \in B_r(x^\star)   \}\,.
\] 

Next, we apply Lemma \ref{l:massR} and Lemma \ref{l:gibbsmassR} with, for simplicity, $\overline\delta= 1/2$ to obtain $R, R'$ and $\alpha$ sufficiently large such that
\[f_{(k)}(B_R(0))\,, \;\; \eta_{(k)}^\alpha(B_{R'}(0)) \geq \frac 12 \qquad \textup{for all}\quad  k\tau\in [0,T^\star]\,. \] 
By eventually taking an even larger radius, we assume $R' > R_l$ and $R' >|x^\star| + r$.

Let restrict to parents' couples with $x,x_* \in B_{R'}(0)$. By definition of $x'$ \eqref{eq:coll} condition $x' \in B_r(x^\star)$ can written as
\[  \sigma \xi \in B_r( x^\star  - (1 - \gamma) \odot x - \gamma \odot x_* ) =: B_r( \tilde x ) \]
with $|\tilde x| \leq 2R'$.
Since $ \mu = \mathcal{N}(0, I_\d)$, we have that $\mu$ is absolutely continuous with respect to the Lebesgue measure $\mathcal{L}^\d$, with density $e^{-(1/2)|\xi|^2}/\sqrt{2\pi \d}$. Thanks to the choice $R'> |x^\star| + r$, we have
\begin{align*}
\mu(\Xi_{x,x_*}) &= \frac{1}{\sqrt{2\pi \d}}  \int \mathbf{1}_{ B_r(\tilde x) }(\sigma \xi ) e^{- |\xi|^2/2}\, d\xi \\
& \geq  \frac{e^{- (R')^2/2}}{\sqrt{2\pi \d }}  \int \mathbf{1}_{ B_{r/\sigma}(\tilde x/\sigma)}(\xi) \,d\xi\\
& = \frac{e^{-(R')^2/2}}{\sqrt{2\pi \d }} \mathcal{L}^\d\left( B_{r/\sigma}(0)\right)\,.
\end{align*}

Next, we apply Fubini's Theorem to the last term in \eqref{eq:weakindicator}, to get
\begin{align*}
\iiint \mathbf{1}_{\{x' \in B_r(x^\star) \}}d\eta^\alpha_{(k)}(x) d\eta^\alpha_{(k)}(x_*)d\mu(\xi) &
 = \iint \mu(\Xi_{x,x_*}) d\eta^\alpha_{(k)}(x) d\eta^\alpha_{(k)}(x_*)  \\
 &
 \geq \int_{B_{R'}(0)} \int_{B_{R'}(0)} \mu(\Xi_{x,x_*}) d\eta^\alpha_{(k)}(x) d\eta^\alpha_{(k)}(x_*)  \\
 & \geq \frac{e^{-(R')^2/2}}{\sqrt{2\pi \d }} \mathcal{L}^\d\left( B_{r/\sigma}(0)\right)\eta_{(k)}^\alpha(B_{R'}(0))^2 \\
 & \geq \frac{e^{-(R')^2/2}}{\sqrt{2\pi \d }} \mathcal{L}^\d\left( B_{r/\sigma}(0)\right) \frac14 =:\delta_r
\end{align*}
for some $\delta_r \in (0,1)$, where we used $\eta^\alpha_{(k)}(B_{R'}(0)) \geq 1/2$ in the last step. 
By inserting the above estimate in \eqref{eq:weakindicator} we have 
\[ f_{(k+1)}(B_r(x^\star)) \geq (1-\tau)f_{(k)}(B_r(x^\star)) + \tau \delta_r \geq \min\left\{f_{(k)}(B_r(x^\star))\,,\, \delta_r\right\}\,,
\]
from which the claim follows.

We note that $\delta_r$ explicitly depends on the quantities $r,\d,\sigma,R'$ and so, consequently on the set $\{r,\d, T^\star,c_u,c_l,R_l,\sigma \}$. Most importantly, we remark that $\delta_r$ does not depend on $\alpha$.
\end{proof}

\subsection{Laplace principle and proof of Theorem \ref{t:main}}
\label{sec:an:laplace}

The reason why we focused on GA with Boltzmann Selection for the convergence analysis lies in the theoretical properties of the Boltzmann-Gibbs distribution. In particular, we aim to exploit the following asymptotic result, known as the \textit{Laplace principle} \cite{Dembo2010}. For any $f\in \mathcal{P}(\Rd)$ absolutely continuous with respect to the Lebesgue measure and with global minimizer $x^\star \in \supp(f)$, it holds
\[
\lim_{\alpha \to \infty} \left( -\frac1\alpha \log \left(  \int e^{-\alpha \E(x)}df(x) \right) \right) = \E(x^\star)\,.
\]
Intuitively, this means that the Boltzmann-Gibbs distribution $\eta^\alpha = e^{-\alpha\E}f/Z^\alpha[f]$ associated with $f$ approximates $\delta_{x^\star}$ for $\alpha \gg 1$. In \cite{fornasier2021consensusbased}, the authors derive a quantitative version of the Laplace principle which allows to control the distance between the weighted mean $m^\alpha[f]$ from the minimizer $x^\star$ through the parameter $\alpha$. We recall the result for completeness.

\begin{proposition}{\cite[Proposition 21]{fornasier2021consensusbased}}
\label{p:laplace}
Let $\inf \E = 0$, $f \in \mathcal{P}(\Rd)$ and fix $\alpha >0$. For any $r>0$ we define $\E_r:= \sup_{x \in B_r(x^\star)} \E(x)$. Then, under Assumption \ref{asm:E}.\ref{asm:inverse}, for any $r \in (0, R_p]$ and $q>0$ such that $q + \E_r< \E_\infty$, we have
\be
|m^\alpha [f] - x^\star| \leq c_p (q + \E_r)^{1/{p}} + \frac{\exp(-\alpha q)}{f(B_r(x^\star))} \int |x - x^\star| df(x)\,.
\label{eq:laplace}
\ee
\end{proposition}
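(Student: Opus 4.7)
My plan is to start from the identity
\[
m^\alpha[f] - x^\star = \frac{\int (x - x^\star)\, e^{-\alpha \E(x)}\, df(x)}{Z^\alpha[f]},
\]
move the absolute value inside the numerator, and then estimate numerator and denominator separately. The controlling idea is to split the integration domain by sublevel sets of $\E$: in the region where $\E(x)$ is small, the inverse-continuity assumption provides a deterministic bound on $|x - x^\star|$, while in the complement the factor $e^{-\alpha \E(x)}$ is exponentially suppressed and produces the $e^{-\alpha q}$ term.

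Concretely, I would introduce the sublevel set $A_{q,r} := \{x \in \Rd : \E(x) \leq q + \E_r\}$ (recall $\inf \E = 0$). The hypothesis $q + \E_r < \E_\infty$ combined with the second clause of Assumption \ref{asm:E}.\ref{asm:inverse} forces $A_{q,r} \subset B_{R_p}(x^\star)$, because otherwise $\E(x) > \E_\infty > q + \E_r$ would contradict membership in $A_{q,r}$. On $B_{R_p}(x^\star)$ the local inverse continuity $c_p|x - x^\star|^p \leq \E(x)$ applies, yielding the uniform deterministic bound $|x - x^\star| \leq c_p(q + \E_r)^{1/p}$ on all of $A_{q,r}$ (absorbing the power $c_p^{-1/p}$ into the constant appearing in the statement). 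Splitting
\[
\int |x - x^\star|\, e^{-\alpha \E(x)}\, df(x) = \int_{A_{q,r}} |x - x^\star|\, e^{-\alpha \E(x)}\, df(x) + \int_{A_{q,r}^c} |x - x^\star|\, e^{-\alpha \E(x)}\, df(x),
\]
the first piece is controlled by $c_p(q + \E_r)^{1/p}\, Z^\alpha[f]$ using the uniform bound and the fact that the integrand $e^{-\alpha \E}$ only contributes more if extended to all of $\Rd$, while the second piece is controlled by $e^{-\alpha(q + \E_r)} \int |x - x^\star|\, df(x)$ using the pointwise estimate $e^{-\alpha \E(x)} \leq e^{-\alpha(q + \E_r)}$ on $A_{q,r}^c$.

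For the denominator I would simply restrict the integral defining $Z^\alpha[f]$ to $B_r(x^\star)$, where by definition of $\E_r$ one has $\E(x) \leq \E_r$, giving $Z^\alpha[f] \geq e^{-\alpha \E_r} f(B_r(x^\star))$. Dividing numerator and denominator, the first piece produces the term $c_p(q + \E_r)^{1/p}$ (the exponentials cancel) and the second produces the term $e^{-\alpha q} f(B_r(x^\star))^{-1} \int |x - x^\star|\, df(x)$, which is exactly the claimed bound. The only delicate point is verifying the inclusion $A_{q,r} \subset B_{R_p}(x^\star)$, which is precisely where the hypothesis $q + \E_r < \E_\infty$ enters and is what ties inverse continuity to the final estimate; every other step reduces to monotonicity of $e^{-\alpha(\cdot)}$ and the triangle inequality, so I do not anticipate a genuine obstacle beyond bookkeeping the constants.
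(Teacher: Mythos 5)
Your argument is correct. Note that the paper does not prove this proposition at all --- it is recalled verbatim from \cite[Proposition 21]{fornasier2021consensusbased} --- and your proof is essentially the standard one given there: bound $|m^\alpha[f]-x^\star|$ by $\int|x-x^\star|e^{-\alpha\E}\,df / Z^\alpha[f]$, split over the sublevel set $\{\E\le q+\E_r\}$ (contained in $B_{R_p}(x^\star)$ precisely because $q+\E_r<\E_\infty$), and lower-bound $Z^\alpha[f]\ge e^{-\alpha\E_r}f(B_r(x^\star))$. The only caveat is the one you already flag: inverse continuity literally gives $|x-x^\star|\le\bigl((q+\E_r)/c_p\bigr)^{1/p}$, so the prefactor in the first term should be $c_p^{-1/p}$ rather than $c_p$; this is an abuse of notation inherited from the statement itself, not a gap in your reasoning.
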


We note that the assumption $\inf \E=0$ is actually superfluous as the Boltzmann-Gibbs weights $e^{-\alpha\E}/Z^\alpha[f]$ are invariant with respect to translations of $\E$. 
From \eqref{eq:laplace}, it becomes clear that quantifying the mass around the minimizer is essential to make the right-hand-side arbitrary small as $\alpha \to \infty$. This is the motivation behind the derivation of lower bounds for $f_{(k)}(B_r(x^\star))$ for all $k\in [0, T^\star]$ in the previous subsection.

Finally, by combining Propositions \ref{p:mass} and \ref{p:laplace} we provide a proof to the main Theorem.

\begin{proof}[Proof of Theorem \ref{t:main}] As noted before, the expected value $m[f_{(k)}]$ evolves according to
\[m[f_{(k+1)}] = (1- \tau) m[f_{(k)}] + \tau m^\alpha[f_{(k)}]\,,\]
from which follows
\begin{align}
\label{eq:errest1}
|m[f_{(k)}] - x^\star| \leq (1-\tau)|m[f_{(k-1)}] - x^\star| + \tau |m^\alpha[f_{(k-1)}] - x^\star|\,.
\end{align}

Recall the time horizon $T^\star$ is given by $T^*:=\log(2|m[f_{(0)}] - x^\star|/\eps)$.
We apply the quantitative Laplace principle (Proposition \ref{p:laplace}) for all $k\tau\in [0,T^\star]$ with  $q_\eps, r_\eps$ be given by
\[ q_\eps:= \frac12 \min\left\{\left(\frac{\eps}{4 c_p} \right)^p, \E_\infty \right \}\,, \quad r_\eps := \max_{s\in[0,R_p]} \{\E_s =  \sup_{x\in B_s(x^\star)} \E(x)  \leq q_\eps \}\,,
\] 
leading to 
\begin{align*}
|m^\alpha[f_{(k)}] - x^\star| & \leq c_p (q_\eps + \E_{r_\eps})^{1/{p}} + \frac{\exp(-\alpha q_\eps)}{f_{(k)}(B_{r_\eps}(x^\star))} \int |x - x^\star| df_{(k)}(x) \\
& \leq \frac \eps 4 + \frac{\exp(-\alpha q_\eps)}{f_{(k)}(B_{r_\eps}(x^\star))} \int |x - x^\star| df_{(k)}(x)\,,
\end{align*}
thanks to the choice of $q_\eps$.

To bound the second term, we apply Proposition \ref{p:mass} and use the energy estimates derived in the proof of Lemma \ref{l:massR}:
\begin{align*}
\int |x - x^\star| df_{(k)}(x) &\leq |x^\star| +  \left(\int |x|^2 \right)^{1/2} \\
&\leq |x^\star| +  \left(e^{T^\star q_1} \int|x|^2 df_{(0)}(x)+ \frac{q_2}{q_1}\left(e^{T^\star q_1} - 1 \right ) \right)^{1/2}=: |x^\star| + K \,,
\end{align*}
and, for $\alpha$ sufficiently large,
\[f_{(k)}(B_{r_\eps}(x^\star)) \geq \min \left \{f_{(k)}(B_{r_\eps}(x^\star)), \delta_r \right \}\,.\] 

By eventually taking an even larger $\alpha$ sufficiently, we obtain
\begin{align*}
|m^\alpha[f_{(k)}] - x^\star| & \leq \frac \eps 4 +\frac{\exp(-\alpha q_\eps)}{f_{(k)}(B_{r_\eps}(x^\star))} \int |x - x^\star| df_{(k)}(x) \\
& \leq \frac \eps 4 + \frac{\exp(-\alpha q_\eps)}{\min\{f_{(0)}(B_{r_\eps}(x^\star)), \delta_{r_\eps}\}} \left (|x^\star| +  K \right) \\
& \leq  \frac \eps 4 + \frac \eps 4  \leq \frac12 |m[f_{(k)}] - x^\star|\,.
\end{align*}
as long as $|m[f_{(k)}] - x^\star|> \eps$.

By plugging the above in \eqref{eq:errest1}, we obtain
\begin{align*}
|m[f_{(k)}] - x^\star| &  \leq (1 - \tau/2)^{k}|m[f_{(0)}] - x^\star| \leq  e^{-k\tau/2}|m[f_{(0)}] - x^\star| \,,
\end{align*}
until the desired accuracy is reached. By definition of $T^\star$, we get
\[\min_{k\,:\,k\tau<T^\star}|m[f_{(k)}] -x^\star| \leq \eps \,.\]  
\end{proof}

\section{Numerical experiments}
\label{sec:num}

In this section, we numerically investigate several aspects related to the proposed kinetic approximation of the GA algorithm.
First of all, we examine how the particles' distribution $f^N_{(k)}$ varies when different population sizes $N$ are considered. We are interested, in particular, in validating the propagation of chaos assumption by numerically checking if $f^N_{(k)}$ converges to a deterministic probability measure as $N$ increases.
Next, we numerically verify to which configurations the GA particle system converges asymptotically for different objectives and selection mechanisms. 
Finally, we compare GA with CBO under the settings of Section \ref{sec:CBO} both with non-degenerate and anisotropic exploration. We do so by considering different scaling parameter $\ve$ and different benchmark problems.

\subsection{Validation of propagation of chaos assumption and steady states}

\begin{figure}\centering
\begin{subfigure}{1\linewidth}
\includegraphics[width = 1\linewidth]{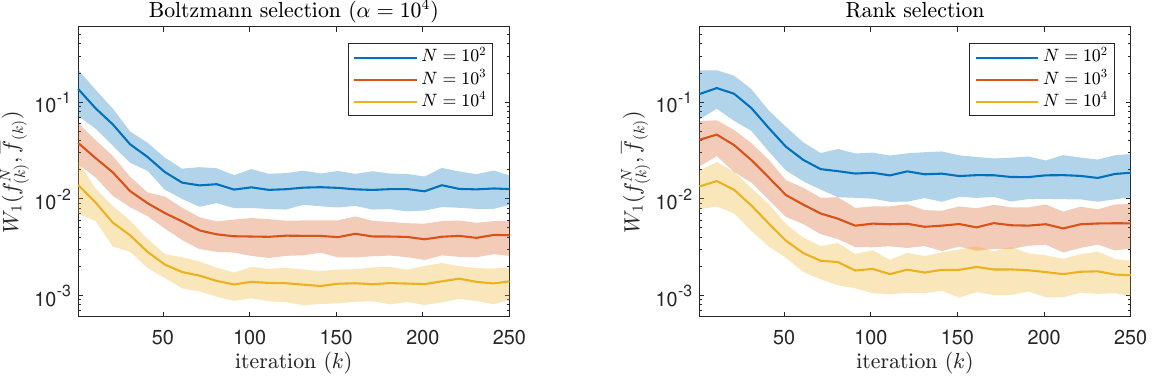}
\caption{Constant mutation strength $\sigma = 0.1$}
\label{fig:propagation_chaos}
\end{subfigure} 
\medskip

\begin{subfigure}{1\linewidth}
\includegraphics[width = 1\linewidth]{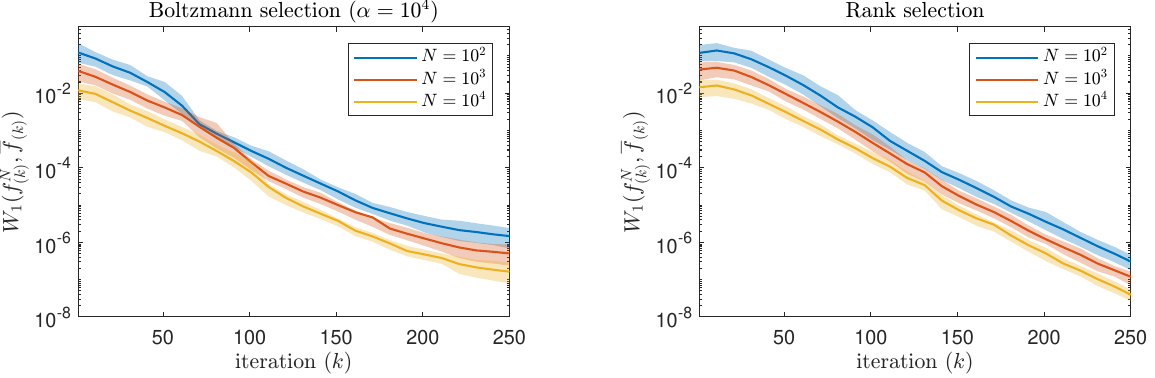}
\caption{Decreasing mutation strength $\sigma_{(0)} = 0.1$, $\sigma_{(k+1)} = 0.95\sigma_{(k)}$ (cooling strategy)}
\label{fig:propagation_chaos_2}
\end{subfigure}

\caption{Evolution of 1-Wasserstein distance between $f^N_{(k)}$ and a reference solution $\overline{f}_{(k)}$ obtained with $10^5$ particles and that approximates the kinetic evolution. Different population sizes $N$ and selection mechanisms are considered. Continuous lines show the mean over $100$ realizations, while shaded area indicate the [0.1,0.9] quantile interval. The objective considered is the Ackley function. Parameters are set to $\tau = 0.1, \gamma = 0.2 (1, \dots, 1)^\top$, different mutation strategies are considered,  and the initial particles' locations are independently sampled from $\textup{Unif}[-2,2]$.}
\label{fig:propagation_chaos}
\end{figure}

We recall that every run of the GA algorithm corresponds to a realization of the Markov process \eqref{eq:Nevolution}. The empirical distribution $f^N_{(k)}$ of the particle system $\{ X_i^{(k)}\}_{i=1}^N$ at step $k$ is therefore a random variable taking values in the space $\mathcal{P}(\Rd)$. Assuming propagation of chaos, as we have done to derive the kinetic mono particle process \eqref{eq:mono}, corresponds to assuming that $f^N_{(k)}$ converges to a deterministic probability measure $f_{(k)} \in \mathcal{P}(\Rd)$ as $N \to \infty$.

To investigate whether this is a plausible assumption, we consider a reference solution $\overline{f}_{(k)}$ obtained with a relatively large population size of $10^5$ particles. Due to the large number of particles, we consider $\overline{f}_{(k)}$ to approximate the kinetic measure $f_{(k)}$. Then, we collect different realizations of $f^N_{(k)}$ with an increasing number of particles $N = 10^2, 10^3, 10^4$ by running the GA algorithm 100 times for each population size. To quantify the distance between the realizations and the reference solution, we use the 1-Wasserstein distance $W_1(\cdot, \cdot)$ which metrizes weak convergence in $\mathcal{P}_1(\Rd)$. For a definition of Wasserstein distances and their properties we refer to \cite{santambrogio2015optimal}.
As optimization problems, we consider the minimization of the Ackley objective function \cite{JamilY13} over $\RR$, that is, we set $\d = 1$.

Figure \ref{fig:propagation_chaos} shows the results for GA algorithm with Boltzmann Selection (BS) and Rank Selection (RS), respectively. For a fixed step $k$, we note that the mean distance among the realizations decreases as $N$ increases, suggesting an asymptotic decay of $\mathbb{E}W_1(f_{(k)}^N, f_{(k)})$ as $N \to \infty$. Moreover, the [0.1,0.9] quantile interval size also decreases as $N$ increases (note that logarithmic scale is used in the plots for $y$-axis), suggesting that $f^N_{(k)}$ indeed converges to a deterministic probability measure. These numerical experiments, therefore, support the propagation of chaos assumption.

If we look at the evolution of the 1-Wasserstein distance over the time iteration $k$, it is interesting to note that the distance decreases over time, until a certain level is reached which depends on the population size $N$. Remarkably, this suggests that the error introduced by the kinetic approximation $f^N_{(0)}\approx f_{(0)}$ does not increment during the computation, but rather it decreases making the kinetic model more and more accurate as the particles move towards the problem's solution. This behavior may be explained by looking at the configurations that large particles systems reach asymptotically.

Figure \ref{fig:steady_states} shows final configurations reached by a particle system with size $N = 10^6$ after $k = 10^3$ iterations. As expected due to the mutation mechanism which adds constant Gaussian noise to the dynamics, the particles seem to normally distribute in a neighborhood the global minimizer. 
To explain why the approximation error decreases during the evolution,  we conjecture that, even with a relatively small population size $N$, the GA particle system is able to capture the behavior of the kinetic evolution and to convergence to an approximation of the same steady states.  

If we consider a GA dynamics with decreasing mutation strength ($\sigma_{(k)} \to 0$ as $k \to \infty$) the expected asymptotic configurations as $k\to \infty$ are given by Dirac measures. As a Dirac measure can be described even by a particle system with only $N = 1$ particle, we expect the kinetic approximation to become exact in the limit $k \to \infty$, provided the GA particle system converges to the global minimum. This is indeed confirmed by numerical experiments. Figure \ref{fig:propagation_chaos_2} shows the evolution of $W_1(f_{(k)}^N, \overline{f}_{(k)})$ when the diffusive behavior given by the mutation mechanism is decreasing. We note how the particles systems $f^N_{(k)}$ converges to the reference solution $\overline{f}_{(k)}$ not only as $N$ increases, but also as $k \to \infty$.

Finally, we remark that, the closeness of the mean of the asymptotic configuration to the minimizer $x^\star$ depends on the effectiveness of the selection mechanism.   For instance, larger values of the parameter $\alpha$ for BS lead to asymptotic configuration more centered around the minimizer, see Figure \ref{fig:steady_states:boltzmann10} and  \ref{fig:steady_states:boltzmann10000} (right plots). This is consistent with the theoretical analysis for GA with BS performed in Section \ref{sec:analysis}.

\begin{figure}
\centering
\begin{subfigure}{0.45\linewidth}
\includegraphics[trim = 0 3.5cm 5cm 0,clip]{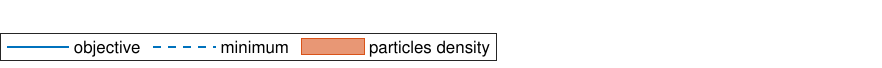}
\end{subfigure}
\begin{subfigure}{1\linewidth}
\includegraphics[width = 1\linewidth]{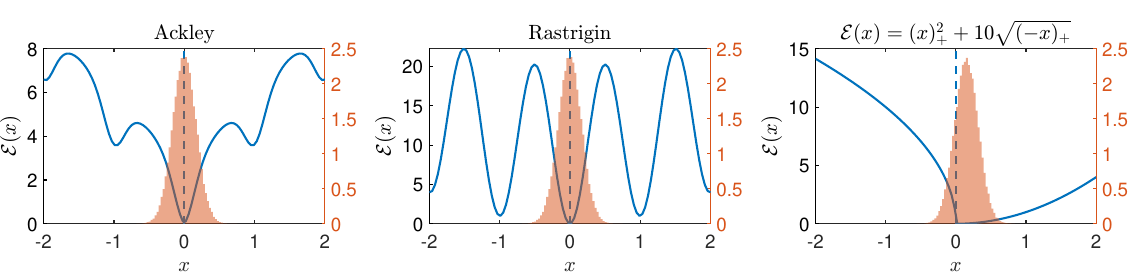}
\caption{Boltzmann Selection ($\alpha = 10$)}
\label{fig:steady_states:boltzmann10}
\end{subfigure}
\begin{subfigure}{1\linewidth}
\includegraphics[width = 1\linewidth]{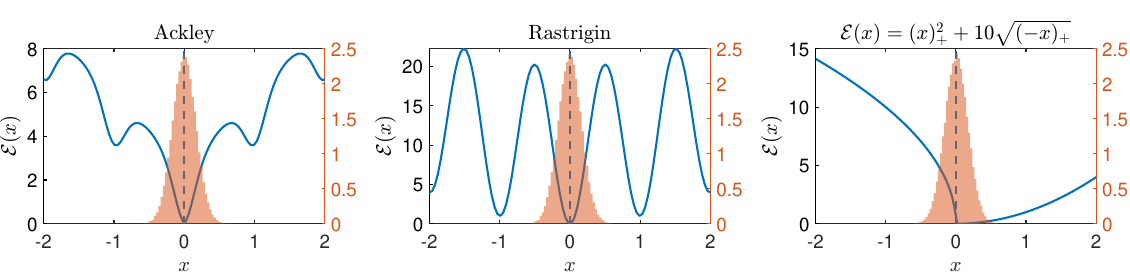}
\caption{Boltzmann Selection ($\alpha = 10^4$)}
\label{fig:steady_states:boltzmann10000}
\end{subfigure}
\begin{subfigure}{1\linewidth}
\includegraphics[width = 1\linewidth]{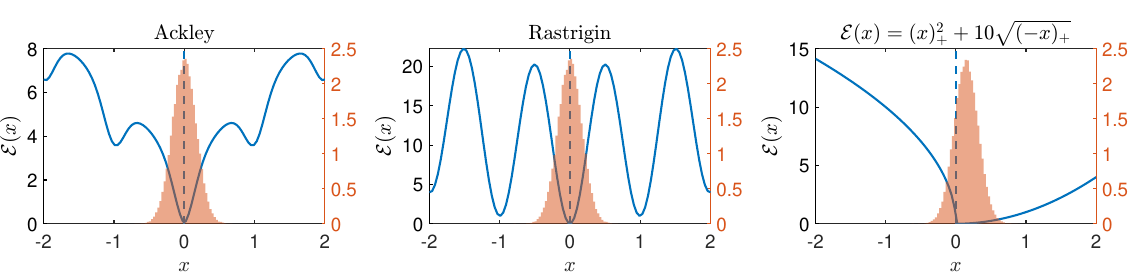}
\caption{Rank Selection}
\label{fig:steady_states:rank}
\end{subfigure}
\caption{Final particle distributions for $N = 10^6$ particles after $k_{\max} = 10^3$ iterations, for different objectives and selection mechanisms. Parameters are set to $\tau = 0.1, \sigma = 0.1, \gamma = 0.2 (1, \dots, 1)^\top$. The initial particles' locations are independently sampled from $\textup{Unif}[-2,2]$. The black and red check marks
on the $y$-axis refer to the objective function and probability distribution, respectively.
}
\label{fig:steady_states}
\end{figure}

\subsection{Comparison between GA and CBO in the scaling limit}

In this section, we compare the performance of GA in the Fokker-Planck scaling described in Section \ref{sec:CBO} for benchmark problems in dimension $\d = 10$. In particular, we consider selection mechanism and the parameters' choice which leads to the CBO dynamics as the rescaling parameter $\ve$ tends to $0$. We recall this is given by choosing $\gamma = (\lambda, \dots, \lambda)$ ($\lambda>0$ being the drift parameter in the CBO dynamics) and the parents' probability distribution 
$\Par_{CBO}[f^N_{(k)}]$, see \eqref{eq:PCBO}. If we sample the parents couple $(\tilde X_{i}^{(k)}, \tilde X^{(k)}_{*,i})$ according to $\Par_{CBO}[f^N_{(k)}]$, we have that $\law(\tilde X_{i}^{(k)}) = f^N_{(k)}$, while the second parents is sampled according to the  Boltzmann-Gibbs distribution associated with $f^N_{(k)}$, which we denote with $\eta_{(k)}^{\alpha,N}$. To obtain the CBO algorithm in the Fokker Planck asymptotic regime, instead of sampling the first parent $\tilde X_{(k)}^i$ from $f^N_{(k)}$, we directly consider the first parent to be  $X^{(k)}_i$. We remark that, form the kinetic point of view, these two choices are equivalent, as the particles are assumed to be identically distributed.

In these settings, the GA dynamics \eqref{eq:Nevolution} reads
\begin{equation*}
X^{(k+1)}_i  = (1 - T^{(k)}_{i})  X^{(k)}_i + T^{(k)}_{i}\, \left((1 - \lambda) X_i^{(k)} +  \lambda \tilde X^{(k)}_{*,i} + \sigma D \xi_i^{(k)}\right) 
\end{equation*}
with $\law(\tilde X^{(k)}_{*,i}) = \eta^{\alpha,N}_{(k)}$ and $\law(T_i^{(k)}) = \textup{Bern}(\tau)$. For convenience, we rewrite the above as
\begin{equation}
X^{(k+1)}_i  = (1 - T^{(k)}_{i})  X^{(k)}_i + T^{(k)}_{i}\, \left(X_i^{(k)} +  \lambda ( \tilde X^{(k)}_{*,i}  - X_i^{(k)})  + \sigma D \xi_i^{(k)}\right) \,.
\label{eq:GA2CBO}
\end{equation}

To implement the scaling $\eqref{eq:scaling}$ for $\ve>0$ we introduce the random variables $T^{(k)}_{\ve,i}\sim \textup{Bern}(\tau/\ve)$, where $\tau/\ve$ is the new interaction frequency. In the time-discrete settings we are considering, we note that it must hold $\tau \leq \ve$. The scaled dynamics reads
\begin{equation}
X^{(k+1)}_i = (1 - T^{(k)}_{\ve,i})  X^{(k)}_i + T^{(k)}_{\ve,i}\, \left(X_i^{(k)} +  \ve\lambda ( \tilde X^{(k)}_{*,i}  - X_i^{(k)})  + \sqrt{\ve}\sigma D \xi_i^{(k)}\right) \,.
\label{eq:GA2CBOeps}
\end{equation}

For $\ve = 1$, \eqref{eq:GA2CBOeps} is clearly equivalent to \eqref{eq:GA2CBO}, while for the smallest choice of $\ve$ allowed, $\ve = \tau$, we obtain the CBO-like dynamics
\begin{equation}
X^{(k+1)}_i =  X_i^{(k)} +  \tau \lambda ( \tilde X^{(k)}_{*,i}  - X_i^{(k)})  + \sqrt{\tau}\sigma D \xi_i^{(k)} \,.
\label{eq:GA2CBOtau}
\end{equation}
Indeed, the only thing that differentiates \eqref{eq:GA2CBOtau} from the CBO iteration is that in latter $\tilde X^{(k)}_{*,i} \sim \eta^{\alpha,N}_{(k)}$ is substituted by its expected value $\mathbb E \tilde X^{(k)}_{*,i}  = m[\eta_{(k)}^{\alpha,N}] =  m^\alpha[f^N_{(k)}]$, see \eqref{eq:wmean}. Again, we note that this difference disappears when deriving the mean-field approximation \eqref{eq:mfCBO}.

In the experiment, we aim to test the above GA dynamics for $\ve = 1, 0.3, \tau$, where $\tau = 0.1$ as before. We consider two different exploration processes: the classical GA one where the diffusion is non-degenerate (also for CBO), and the anisotropic one 
 which is potentially degenerate, see \eqref{eq:GAaniso} and \eqref{eq:CBOaniso}. In particular, we consider $D = I_\d$ with decreasing $\sigma_{(k+1)} = 0.95\sigma_{(k)}$, and $D = \textup{diag}(\tilde X_{*,i}^{(k)} - X_i^{(k)})$ with constant $\sigma$.  The remaining parameters are set to $\lambda = 1, \sigma_{(0)} = \sigma = 1$, $k_{\max} = 300$. The CBO dynamics is also included for comparison, with $\sigma = 3$. The diffusion parameters $\sigma$ have been optimized for each algorithm to provide the best performance across the benchmark tests. As optimization problems, we consider the minimization of three well known non-convex objectives in dimension $\d = 10$, that is, the functions Stydnlinski-Tank, Ackley, and Rastrigin \cite{JamilY13}.

Figures \ref{fig:scaling:error} shows the accuracy reached the end of the computation in terms of the $\ell_2$-distance between the best particle $X^{(k)}_{\textup{best}}$ and the solution $x^\star$, at the end time $k = k_{\max}$. In Figure \ref{fig:scaling:accuracy}, instead, we report the final accuracy $\E(X^{(k)}_{\textup{best}}) -\E(x^\star)$. Different population sizes are considered: $N = 10^2, 10^3, 10^4$.

We note that the algorithms' performance may vary sensibly even when the same parameters are used. This is particular evident in the minimization of the Styblinski-Tank objective where the particles  often converge towards sub-optimal solution far from the global one $x^\star$. Anisotropic exploration leads to more stable result with respect to non-degenerate exploration coupled with the cooling strategy. Typically, using larger population sizes $N$ lead to better results. Also, the GA algorithm seems to perform better for small scaling parameters $\ve$, in agreement with what observed for other kinetic models in optimization, see Remark \ref{rmk:theorem}.  As expected, the choice $\ve = \tau = 0.1$ for GA leads to similar results to the CBO algorithm.

\begin{figure}
\centering
\begin{subfigure}{0.45\linewidth}
\includegraphics[trim = 10.5cm 5.4cm 0cm 0,clip]{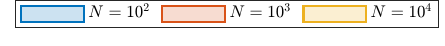}
\end{subfigure}

\bigskip

\begin{subfigure}{1\linewidth}
\includegraphics[trim = 0cm 0cm 0cm 0.45cm,clip,width = 1\linewidth]{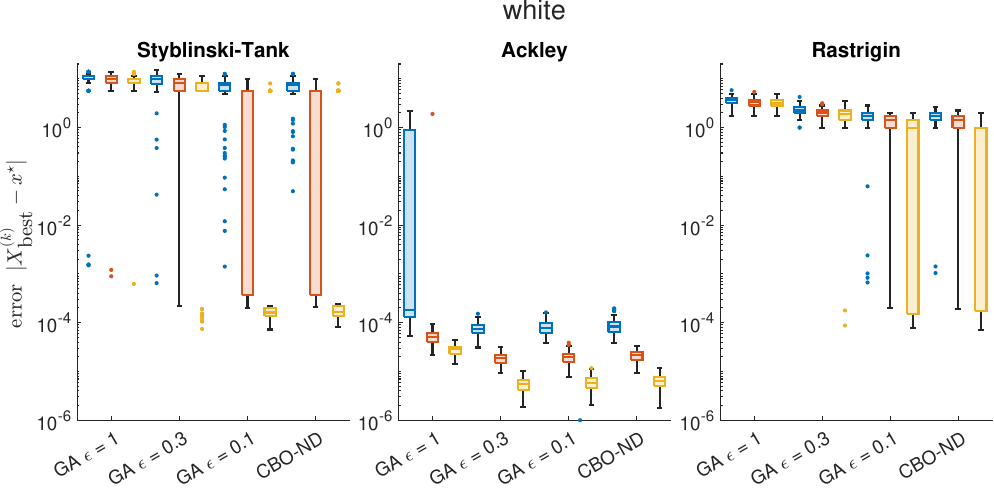}
\caption{Non-degenerate diffusion ($D = (1,\dots,1)^\top$) with cooling strategy $\sigma_{(k+1)} =0.95 \sigma_{(k)}$ }
\label{fig:scaling:error:a}
\end{subfigure}

\bigskip

\bigskip

\begin{subfigure}{1\linewidth}
\includegraphics[trim = 0cm 0cm 0cm 0.45cm,clip,width = 1\linewidth]{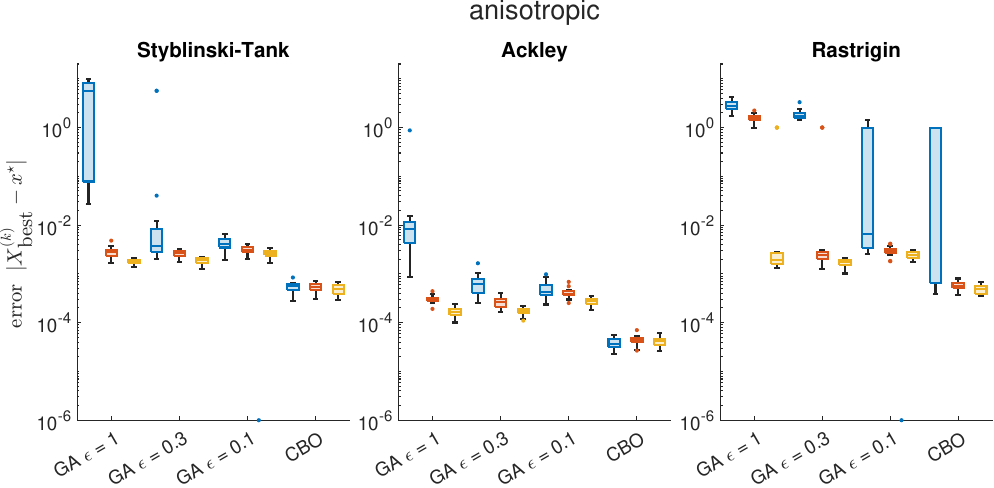}
\caption{Anisotropic diffusion ($D = x_* - x$) without cooling strategy $\sigma_{(k)} = \sigma_{(0)}$}
\label{fig:scaling:error:b}
\end{subfigure}

\caption{Error $|X^{(k)}_\textup{best} - x^\star|$ at the last iteration $k = k_{\max} = 300$ for algorithms \eqref{eq:GA2CBO} ($\ve = 1$), \eqref{eq:GA2CBOeps} ($\ve  =0.3$), \eqref{eq:GA2CBOtau} ($\ve  =\tau  = 0.1$) and CBO. Problems' dimension is $\d =10$. Parameters are set to $\tau = 0.1, \alpha = 10^4, \lambda = 1, \sigma_{(0)} = 1$. the initial particles' locations are independently sampled from $\textup{Unif}[-2,2]^\d$. The box charts display median, lower and upper quartiles, and outliers (computed with interquartile range), for the 100 experiments performed. }
\label{fig:scaling:error}
\end{figure}

\begin{figure}
\centering
\begin{subfigure}{0.45\linewidth}
\includegraphics[trim = 10.5cm 5.4cm 0cm 0,clip]{fig_scaling_comparison_legend.pdf}
\end{subfigure}

\bigskip

\begin{subfigure}{1\linewidth}
\includegraphics[trim = 0cm 0cm 0cm 0.45cm,clip,width = 1\linewidth]{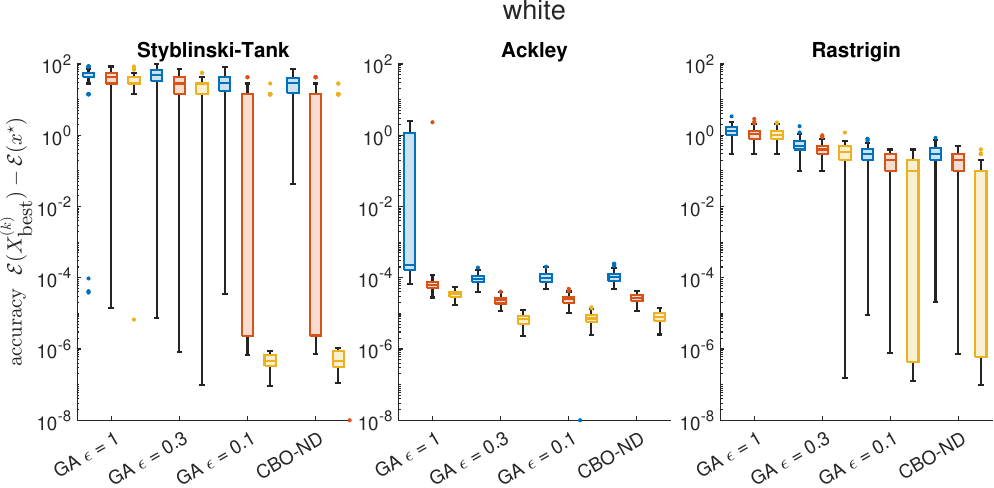}
\caption{Non-degenerate diffusion ($D = (1,\dots,1)^\top$) with cooling strategy $\sigma_{(k+1)} =0.95 \sigma_{(k)}$ }
\label{fig:scaling:accuracy:a}
\end{subfigure}

\bigskip

\bigskip

\begin{subfigure}{1\linewidth}
\includegraphics[trim = 0cm 0cm 0cm 0.45cm,clip,width = 1\linewidth]{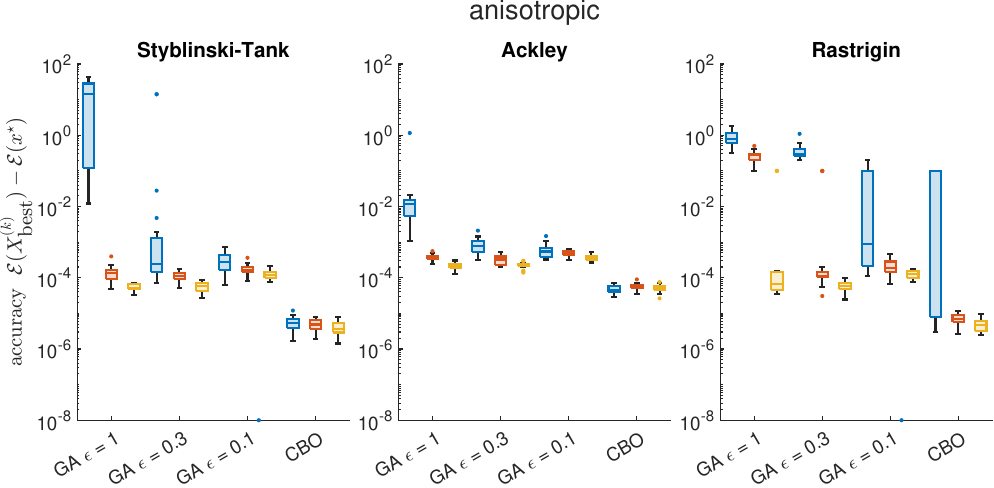}
\caption{Anisotropic diffusion ($D = x_* - x$) without cooling strategy $\sigma_{(k)} = \sigma_{(0)}$}
\label{fig:scaling:accuracy:b}
\end{subfigure}

\caption{Accuracy $\E(X^{(k)}_\textup{best}) - \E(x^\star)$ at the last iteration $k = k_{\max} = 300$ for algorithms \eqref{eq:GA2CBO} ($\ve = 1$), \eqref{eq:GA2CBOeps} ($\ve  =0.3$), \eqref{eq:GA2CBOtau} ($\ve  =\tau  = 0.1$) and CBO. Problems' dimension is $\d =10$. Parameters are set to $\tau = 0.1, \alpha = 10^4, \lambda = 1, \sigma_{(0)} = 1$. the initial particles' locations are independently sampled from $\textup{Unif}[-2,2]^\d$. The box charts display median, lower and upper quartiles, and outliers (computed with interquartile range), for the 100 experiments performed. }

\label{fig:scaling:accuracy}
\end{figure}

\section{Conclusions}
\label{sec:conclusions}

In this work, we have presented new mathematical approach for the theoretical analysis of
Genetic Algorithms (GA) inspired by the kinetic theory for colliding particles in statistical physics.
The individuals characterizing the GA iteration are modeled as random variables interacting ad
discrete times, and their dynamics is approximated by assuming the propagation of chaos property
for large population sizes. We further derived a continuous-in-time kinetic model consisting of an
equation of Boltzmann type and established relations with other kinetic and mean-field models in
optimization, such as kinetic-based optimization (KBO) and consensus-based optimization (CBO).

 The proposed kinetic description can be applied both to GA algorithms with fitness-based and
rank-based selection mechanisms and allows for a rigorous convergence analysis of such metaheuristic
optimization methods. For GA with Boltzmann selection, in particular, we have been able to prove
that the particle mean reaches an arbitrarily small neighborhood of the global solution under
mild assumptions on the objective function. This was done by exploiting the quantitative Laplace
principle proved in \cite{fornasier2021consensusbased}. The performed numerical experiments validated the propagation of chaos
assumption and showed the steady states reached by the GA dynamics for different parameter
choices. Benchmark tests in dimension $\d = 10$ showed the performance of the GA algorithms for
different scalings of the parameters.
 
 The results presented here represent a further step forward in the understanding of metaheuristic
algorithms and their mathematically rigorous formulation through the lens of statistical physics.
These results stand as a natural continuation and completion of similar results for simulated
annealing, particle swarm optimization and consensus based optimization \cite{benfenati2021binary,albi2023kinetic,pareschi23,Grassi21,carrillo2018analytical,Hui22}. 

In the sequel, we plan to address our attention to the convergence analysis of rank-based dynamics
and to prove the propagation of chaos property of GA particle systems. A particularly challenging
aspect is to derive a quantitative estimate of the error of the kinetic approximation in terms of
the number of $N$ particles. This will allow to extend the convergence analysis performed at the
kinetic level directly to the GA particle system, providing a complete mathematical framework for
the analysis of the GA heuristic.

\section*{Acknowledgment}
This work has been written within the activities of GNCS group of INdAM (National Institute of High Mathematics) and is partially supported by ICSC – Centro Nazionale di Ricerca in High Performance Computing, Big Data and Quantum Computing, funded by European Union – NextGenerationEU. L.P. acknowledges the support of the Royal Society Wolfson Fellowship 2023-2028 “Uncertainty quantification, data-driven simulations and learning of multiscale complex systems governed by PDEs”.  The work of G.B. is funded by the Deutsche Forschungsgemeinschaft (DFG, German Research Foundation) through 320021702/GRK2326 ``Energy, Entropy, and Dissipative Dynamics (EDDy)''.

\bibliographystyle{abbrv}
\bibliography{bibfile}


\end{document}